\newtheorem{defi}{Definition}[section]
\newtheorem{lem}[defi]{Lemma}
\newtheorem{theo}[defi]{Theorem}
\newtheorem{pro}[defi]{Proposition}
\newtheorem{rem}[defi]{Remark}
\DeclareMathOperator{\R}{\mathbb{R}}
\DeclareMathOperator{\T}{\mathbb{T}}
\newcommand{\norm}[1]{\left\lVert#1\right\rVert} 
\DeclarePairedDelimiter\abs{\lvert}{\rvert}%
\title[Traveling gravity-capillary waves with odd viscosity]{Traveling gravity-capillary waves with odd viscosity}
\author[D. Alonso-Or\'an]{Diego Alonso-Or\'an}
\address{ Departamento de An\'alisis Matem\'atico y Instituto de Matem\'aticas y Aplicaciones (IMAULL), Uni-
versidad de La Laguna C/. Astrof\'isico Francisco S\'anchez s/n, 38200 - La Laguna, Spain}
\email{dalonsoo@ull.edu.es}
\author[C. Garc\'ia]{Claudia Garc\'ia}
\address{ Departamento de Matem\'atica Aplicada \& Research Unit ``Modeling Nature'' (MNat), Facultad de Ciencias, Universidad de Granada, 18071 Granada, Spain}
\email{claudiagarcia@ugr.es}
\author[R. Granero-Belinch\'on]{Rafael Granero-Belinch\'on}
\address{ Departamento de Matem\'aticas, Estad\'istica y Computaci\'on, Universidad de Cantabria. Avda. Los Castros s/n, Santander, Spain.}
\email{rafael.granero@unican.es}
\thanks{}}
\begin{document}

\date{\today}

\begin{abstract}
In this note, we study the existence of traveling waves of a surface model in a non-newtonian fluid with odd viscosity. The proof relies on nonlinear bifurcation techniques.
\end{abstract}

\maketitle

\tableofcontents

\section{Introduction and main result}
The motion of an incompressible fluid is described by the following non-linear system
\begin{align}\label{fluids:system}
\rho \left(u_{t}+(u\cdot\nabla)u\right)&= \nabla\cdot \mathcal{T}, \nonumber  \\
\rho_{t}+\nabla\cdot (u\rho)&=0,\nonumber  \\
\nabla\cdot u&=0,
\end{align}
where $u(x,t):\Omega\times [0,T]\to \mathbb{R}^{d}, \quad \rho(x,t): \Omega\times [0,T]\to \mathbb{R}$ represent the the velocity field and density of the fluid respectively and $d=2,3$. Moreover, $\mathcal{T}$ denotes the stress tensor that varies depending the different properties of the fluid. For inviscid and newtonian fluids, the stress tensor is given by 
\[ \mathcal{T}^{i}_{j}=-p\delta_{j}^{i},\] and system \eqref{fluids:system} reduces to the well-known Euler equations. For viscous newtonian fluids, the stress tensor has even symmetry and takes the form
\[ \mathcal{T}^{i}_{j}=-p\delta_{j}^{i}+\left( \partial_{x_{j}}u^{i}+\partial_{x_{i}}u^{j} \right).\]
However, in stark contrast to the classical scenario for viscous or inviscid newtonian fluids, there are classes of fluids with broken microscopic time-reversal symmetry and parity, namely quantum fluid (magnetized plasmas or electron fluids) or classical fluid systems (polyatomic gases). We refer the interested reader to \cite{AvronSeiler95} for a more detailed explanation and discussion. 
In two-dimensional fluid systems where microscopic time reversal and parity are violated, the viscosity tensor includes a skew-symmetric component often referred to as odd viscosity given by
\begin{equation}\label{stress:tensorodd}
\mathcal{T}^{i}_{j}=-p\delta_{j}^{i}+\left( \partial_{x_{i}}\left(u^{j}\right)^{\perp}+\left(\partial_{x_{i}}\right)^{\perp}u^{j} \right).
\end{equation}
Although in three dimensions, terms in the viscosity tensor with odd symmetry were known in the context of anisotropic fluids \cite{Land-Lif_Fluids}, Avron noticed that in two dimensions, odd viscosity and isotropy can hold at the same time, \cite{Avron}. Although the recently increasing interest of the mathematical and physical community in fluids with odd viscosity effects, there are not so many mathematical works considering this setting. 

Recently, in \cite{FGS}, the authors establish a well-posedness theory in Sobolev spaces for a system of incompressible non-homogeneous fluids with odd viscosity given by \eqref{stress:tensorodd}. A well-posedness theory in Besov spaces was later proved in \cite{FV}. Remarkably, in this last paper the authors manage to prove the the solution is asymptotically global in the sense that the lifespan grows as the density tends to homogeneity. \medskip

Recently, in \cite{Granero-Ortega21}, the authors obtained three new models for capillary–gravity surface waves with odd viscosity through a multi-scale expansion in the steepness of the wave. The multi-scale expansion approach (cf. \cite{ADG,CGSW}) reduces the full system to a cascade of linear equations which can be closed up to some order of precision. The derived models in \cite{Granero-Ortega21} consider effects of both gravity and surface tension forces generalizing those in \cite{Abanov2010,abanov2019free}. One of the asymptotic models studies the unidirectional surface waves, given by the dispersive equation
\begin{align}\label{dispersive:eq}
2f_t +\alpha_0\Lambda [f_t]=&\frac{1}{\varepsilon}\left\{f_{x}+\mathcal{H}[f]+(\alpha_0-\beta)\mathcal{H}[f_{x x}]\right\}\nonumber \\
&+\mathcal{H}[(\Lambda f)^2]-\llbracket\mathcal{H},f\rrbracket[\Lambda f]+(\alpha_0-\beta)\llbracket\mathcal{H},f\rrbracket\Lambda^3 f,
\end{align}
where $f:[0,T]\times\T\rightarrow \R$. Here $\varepsilon$ is known as the steepness parameter and measures the ratio between the amplitude and the wavelength of the wave, $\alpha_0$ is linked to the Reynolds numbers and represents the ratio between gravity and odd viscosity forces. In this paper we assume this parameter to be strictly positive. Finally, $\beta$ is the Bond number comparing the gravity and capillary forces. Notice that \eqref{dispersive:eq} conserves the total mass of water for periodic domains and for waves that decay fast enough at infinity. Besides the derivation of the model \eqref{dispersive:eq} the authors in \cite{Granero-Ortega21} showed the locally well-posedness in $H^{3}(\R)$ when the odd Reynolds number $\alpha_{0}$ is strictly positive regardless and no assumption on the value of the Bond number $\beta$. Furthermore, for $0 < \alpha_{0}= \beta$, the problem admits a distributional solution in $H^{1.5}(\R)$. \medskip

The main result provided is this manuscript shows the existence of traveling waves for equation \eqref{dispersive:eq} and reads as follows:

\begin{theo}\label{main:theorem}
For $0<\alpha_{0}\neq \beta$, and for any $m\geq 1$, there exists a one dimensional curve $s\mapsto (c_s, \varphi_s)$, with $s\in I$, such that
$$
f_0(x)=\varphi_s(x),
$$
is a m-fold traveling wave solution to \eqref{dispersive:eq} with constant speed $c_s$.
\end{theo}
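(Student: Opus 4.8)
The plan is to recast the problem as a bifurcation from the trivial branch and to apply the Crandall--Rabinowitz theorem on bifurcation from a simple eigenvalue. Inserting the traveling ansatz $f(t,x)=\varphi(x-ct)$ into \eqref{dispersive:eq} and using $f_t=-c\varphi'$ turns the evolution equation into the stationary profile equation $F(c,\varphi)=0$, where
\[
F(c,\varphi):=c\big(2\varphi'+\alpha_0\Lambda\varphi'\big)+\frac{1}{\varepsilon}\Big(\varphi'+\mathcal{H}\varphi+(\alpha_0-\beta)\mathcal{H}\varphi''\Big)+\mathcal{H}\big[(\Lambda\varphi)^2\big]-\llbracket\mathcal{H},\varphi\rrbracket[\Lambda\varphi]+(\alpha_0-\beta)\llbracket\mathcal{H},\varphi\rrbracket\Lambda^3\varphi .
\]
Since every term is at least linear in $\varphi$, the trivial solution satisfies $F(c,0)=0$ for all $c$. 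To encode the $m$-fold symmetry I would work on $\T$ with
\[
X:=\big\{\varphi\in H^{k}(\T):\ \varphi\text{ even},\ \widehat{\varphi}(n)=0\text{ unless }n\in m\Z,\ \widehat\varphi(0)=0\big\}=\overline{\mathrm{span}}\{\cos(jmx):j\geq1\},
\]
and $Y:=\overline{\mathrm{span}}\{\sin(jmx):j\geq1\}\cap H^{k-2}$, with $k\geq3$. A parity check (under $\mathcal{H}$, which exchanges even and odd modes, and $\Lambda$, which preserves parity) shows $F$ sends even profiles to odd functions, so $F:\R\times X\to Y$ is well defined.

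The second step is to verify that $F$ is a smooth, indeed real--analytic, map between these spaces. This is where the hypothesis $\alpha_0\neq\beta$ first enters: the top--order linear term $(\alpha_0-\beta)\mathcal{H}\varphi''$ is then genuinely of order two and fixes the two--derivative loss $X\to Y$. The quadratic terms are controlled by the Sobolev algebra property together with the commutator estimate $\|\llbracket\mathcal{H},\varphi\rrbracket g\|_{H^{s+1}}\lesssim\|\varphi\|_{H^{s+1}}\|g\|_{H^{s}}$, which makes $\llbracket\mathcal{H},\varphi\rrbracket\Lambda^3\varphi$ an effective order--two quadratic term landing in $H^{k-2}$; since $F$ is affine in $c$ and quadratic in $\varphi$, smoothness is automatic once boundedness is established.

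The linearization at the trivial branch, $\mathcal{L}_{c}:=D_\varphi F(c,0)$, is a Fourier multiplier because all nonlinear contributions are quadratic:
\[
\mathcal{L}_{c}\cos(nx)=\sigma(c,n)\,\sin(nx),\qquad \sigma(c,n)=c\,n(2+\alpha_0 n)+\frac{1}{\varepsilon}\big((\alpha_0-\beta)n^{2}+n-1\big).
\]
Imposing $\sigma(c,m)=0$ selects the bifurcation speed
\[
c_m=-\frac{1}{\varepsilon}\,\frac{(\alpha_0-\beta)m^{2}+m-1}{m(2+\alpha_0 m)},
\]
so that $\cos(mx)\in\Ker\mathcal{L}_{c_m}$. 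The transversality condition is then immediate from the explicit symbol, since $\partial_c\sigma(c_m,m)=m(2+\alpha_0 m)\neq0$ for $\alpha_0>0$, $m\geq1$; hence $\partial_c\mathcal{L}_{c_m}[\cos(mx)]$ is a nonzero multiple of $\sin(mx)$, precisely the single direction to be excluded from the range.

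The main obstacle is the remaining \emph{spectral} requirement: that $\Ker\mathcal{L}_{c_m}$ be one--dimensional and that $\mathcal{L}_{c_m}:X\to Y$ be Fredholm of index zero with one--codimensional range. Within the $m$-fold class the admissible frequencies are $n\in\{m,2m,3m,\dots\}$, so I must show $\sigma(c_m,n)\neq0$ for every $n=jm$ with $j\geq2$. As a function of $n$, the equation $\sigma(c_m,n)=0$ is quadratic with leading coefficient proportional to $(\alpha_0-2\beta)m+\alpha_0$; one root is $n=m$, and a short computation gives the second root $n_\ast=-(\alpha_0 m+2)/\big((\alpha_0-2\beta)m+\alpha_0\big)$. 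When $(\alpha_0-2\beta)m+\alpha_0\geq0$ this root is negative or the equation degenerates to a linear one with unique root $m$, so simplicity is automatic; in the complementary regime one checks that the positive root $n_\ast$ cannot coincide with an integer multiple $jm$, $j\geq2$, invoking $\alpha_0>0$ and $\alpha_0\neq\beta$ to exclude the finitely many resonant parameter configurations. With simplicity, Fredholmness and transversality in hand, the Crandall--Rabinowitz theorem produces a local $C^{1}$ curve $s\mapsto(c_s,\varphi_s)$ with $(c_0,\varphi_0)=(c_m,0)$ and $\varphi_s=s\cos(mx)+o(s)$, which is the asserted branch of $m$-fold traveling waves. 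I expect the verification of kernel simplicity in the resonant regime to be the only genuinely delicate point.
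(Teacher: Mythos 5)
Your strategy is essentially the paper's own: the traveling ansatz reduces \eqref{dispersive:eq} to $F(c,\varphi)=0$ with a trivial branch, one works on spaces of mean-free even (cosine) profiles mapped into odd (sine) data, the linearization at $(c,0)$ is the Fourier multiplier $\sigma(c,n)=c\,n(2+\alpha_0 n)+\tfrac{1}{\varepsilon}\bigl((\alpha_0-\beta)n^{2}+n-1\bigr)$ (the paper's \eqref{lin:inFourier} up to an overall sign), your bifurcation speeds $c_m$ coincide with the paper's $c_k$, transversality follows from $\partial_c\sigma(c_m,m)=m(2+\alpha_0 m)\neq 0$, and Crandall--Rabinowitz concludes. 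The only structural difference is the functional setting: you use Sobolev spaces and an \emph{asserted} commutator estimate, whereas the paper works with $X=C^{3,\alpha}$, $Y=C^{1,\alpha}$ and spends its main technical effort proving the H\"older commutator bound $\norm{\llbracket\mathcal{H},a\rrbracket[b']}_{C^{1,\alpha}}\leq C\norm{a}_{C^{2,\alpha}}\norm{b}_{C^{1,\alpha}}$ (Lemma \ref{lemma:comutator}), which is precisely what controls $(\alpha_0-\beta)\llbracket\mathcal{H},\varphi\rrbracket[\Lambda^3\varphi]$ in Proposition \ref{well:define1}; your route is viable, but that estimate is the real work and you take it for granted.

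The genuine gap is in the step you yourself single out as delicate. You claim that in the regime $(\alpha_0-2\beta)m+\alpha_0<0$ the second root $n_\ast=-(\alpha_0 m+2)/\bigl((\alpha_0-2\beta)m+\alpha_0\bigr)$ of $\sigma(c_m,\cdot)$ cannot equal $jm$ for an integer $j\geq 2$, ``invoking $\alpha_0>0$ and $\alpha_0\neq\beta$''. That is false: solving $n_\ast=jm$ for $\beta$ gives the resonant lines $\beta=\tfrac{(jm+j+1)\alpha_0}{2jm}+\tfrac{1}{jm^2}$, $j\geq 2$, which lie inside the admissible region $\{0<\alpha_0\neq\beta\}$. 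Concretely, take $m=1$, $\alpha_0=2$, $\beta=3$: then $c_1=\tfrac{1}{4\varepsilon}$ and the multiplier equals a nonzero constant times $(n-1)(n-2)$, so the kernel of $\partial_\varphi F[c_1,0]$ on the $1$-fold space is spanned by $\cos x$ \emph{and} $\cos 2x$, i.e.\ two-dimensional, and the Crandall--Rabinowitz theorem as stated does not apply. A correct argument must either exclude this countable family of resonant lines from the hypotheses, or handle the resonant case separately (for instance by bifurcating from the simple kernel in the higher-symmetry space $X_{2}$, at the price of producing a wave with extra symmetry). For what it is worth, you identified a real weak point: the paper's Proposition \ref{prop:kernel} simply declares the one-dimensionality of the kernel to be clear and never examines the second root of the quadratic symbol, so the issue is glossed over there as well; but your proposed resolution, as written, is incorrect.
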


\begin{rem}
For \(0 < \alpha_0 = \beta\), one can mimic the proof of Theorem \ref{main:theorem} and recover the same result. Notice that for \(0 < \alpha_0 = \beta\), the singular commutator term \(\llbracket \mathcal{H}, f \rrbracket \Lambda^3 f\) is not present, and there is no need to invoke the commutator Lemma \ref{lemma:comutator} to show the analogue of Proposition \ref{well:define1}. 

\end{rem}

The literature regarding the study of permanent progressive waves, as solitary and traveling waves, is a key area of interest. These waves, also known as steady waves, propagate without changing their shape over time. Given the significant complexity of the classical water wave problem, numerous approximate models have been studied since the early years. These models are formally derived through various scaling limits. Perhaps the canonical example is the so called Korteweg-de Vries (KdV) equation
\[ u_{t}+3(u^2)_{x}+u_{xxx}=0,\]
to model propagation of surface water waves with small amplitudes and long wavelengths in a channel, \cite{Boussinesq,KortewegVries}. The KdV equation includes the essential effects of nonlinearity and dispersion. The mathematical theory for the KdV equation is well-known, featuring a theory of well-posedness and a thorough understanding of the stability properties of solitary and traveling waves, \cite{Benjamin,Bona,GSS,Kato2}. Similarly, there have been other successful models where the existence of traveling waves have been extensively studied such as the Fornberg-Whitam equation \cite{Fornberg}
\[ u_{xxt}-u_{t}+\frac{9}{2}u_{x}u_{xx}+\frac{3}{2}uu_{xxx}-\frac{3}{2}uu_{x}+u_{x}=0,\]
 or the Camassa-Holm equation \cite{CH,Fuchs}
\[u_{t}-u_{txx}+3uu_{x}+2u_{x}=2u_{x}u_{xx}+uu_{xxx}.\]
For the former, traveling wave solutions of kink-like and antikink-like wave solutions were recently investigated in \cite{ZhouTian} and the references therein. The latter, has been deeply analyze and all types of traveling waves solutions are classified such as peakons, cuspons, stumpons, and composite waves, cf. \cite{FerreriaKraenkel,Lenells1,Lenells2}. \medskip

To the best of the author's knowledge, Theorem \ref{main:theorem} seems to be the first rigorous result regarding the existence of traveling waves solutions for fluids with odd viscosity effects.

\subsubsection*{Plan of the paper}
In Section \ref{sec:2}, we present the notation used throughout the article as well as some auxiliary results. In particular, we provide a commutator estimate for the Hilbert transform in H\"older spaces and recall basic tools in bifurcation theory. In Section \ref{sec:3} we introduce the formulation of the problem as well as the function spaces that will be used in order to implement the Crandall-Rabinowitz theorem. In Subsection \ref{sec:31} we study the spectral properties of the linearized operator and check such linear operator is a Fredholm operator of zero index. Finally, we also study the kernel and the range to verify the transversality condition. In Subsection \ref{sec:32}, gathering the different results provided previously, we invoke the Crandall-Rabinowitz theorem to show the proof of the Theorem \ref{main:theorem}.

\section{Notation and auxiliary results}\label{sec:2}
For a function $f\in \T$ with values in $\R$, we define the H\"older norms as 
\begin{align*}
\norm{f}_{C^0(\T)}&=\sup_{x\in \mathbb{\T}^1} \abs{f}, \; \norm{f}_{C^k(\T)}=\norm{f}_{C^0(\T)}+\sum_{\ell=1}^k \norm{\partial_x^\ell f}_{C^0(\T)},\; k\in \mathbb{N},\\
\norm{f}_{C^{\alpha}(\T)}&=\norm{f}_{C^0(\T)}+\sup_{x_1,x_2\in (\T)} \frac{\abs{f(x_1)-f(x_2)}}{\abs{x_1-x_2}^{\alpha}}, \; 0<\alpha<1,\\
\norm{f}_{C^{k,\alpha}}&=\norm{f}_{C^{k-1}(\T)}+\norm{\partial_x^k f}_{C^\alpha(\T)}, \; k\in \mathbb{N},\; 0<\alpha<1.
\end{align*}
The Banach space of continuous functions for which the above norms are finite will be denoted $C^{k}(\T;\mathbb{R})$ and $C^{k,\gamma}(\T;\mathbb{R})$. The linear operator $\mathcal{H}$ refers to the Hilbert transform in the periodic setting and is given by
\begin{equation}\label{def:Hilbert}
\mathcal{H}[f](x)=\frac{1}{2\pi} p.v. \int_{-\pi}^{\pi} \frac{f(y)}{\tan\left(\frac{x-y}{2}\right)} dy,
\end{equation}
and $\Lambda=\mathcal{H}\partial_{x}$ the Zygmund operator is defined as
\begin{equation}\label{def:Lambda}
\Lambda[f](x)=\frac{1}{4\pi}p.v. \int_{-\pi}^{\pi}\frac{f(x)-f(x-y)}{\sin^2(\frac{y}{2})}dy.
\end{equation}
Moreover, given an operator $\mathsf{T}$, we define the commutator as  $\llbracket\mathsf{T},f\rrbracket [g]=\mathsf{T}(fg)-f\mathsf{T}(g)$. \medskip

We will denote with $C$ a positive generic constant that depends only on fixed parameters. Note also that this constant might differ from line to line. \medskip

Next, we show a commutator estimate for the Hilbert transform $\mathcal{H}$ in H\"older spaces. Similar commutator estimates to the one provided in this article can be found in  \cite[Lemma B.1]{Constantin-Varvaruca11} in the context of water waves or \cite[Lemma 2.2]{GarciaJuarezetal24} for the interface Stokes flow problem. However, to the best of the authors knowledge, the estimate provided here does not follow from the previous results.

\begin{lem}\label{lemma:comutator}
Let $\alpha\in (0,1)$, $a\in C^{2,\alpha}(\T), \ b\in C^{1,\alpha}(\T)$. Then, we have that
\begin{equation}\label{est:commutator}
\norm{\llbracket\mathcal{H},a \rrbracket [b']}_{C^{1,\alpha}(\T)}\leq C\norm{a}_{C^{2,\alpha}(\T)}\norm{b}_{C^{1,\alpha}(\T)}.
\end{equation}
\end{lem}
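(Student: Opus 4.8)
The plan is to reduce the whole bound to the boundedness of the Hilbert transform and of an order-zero Calderón--Zygmund commutator on Hölder spaces. Writing the commutator explicitly,
\[
\llbracket\mathcal{H},a\rrbracket[b'](x)=\frac{1}{2\pi}\, p.v. \int_{-\pi}^{\pi}\frac{a(y)-a(x)}{\tan\left(\frac{x-y}{2}\right)}\,b'(y)\,dy,
\]
the numerator $a(y)-a(x)$ vanishes to first order on the diagonal and cancels the singularity $1/\tan\left(\frac{x-y}{2}\right)\sim 2/(x-y)$, so the kernel is bounded by $C\norm{a}_{C^1(\T)}$ and the principal value is in fact an absolutely convergent integral. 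This already gives $\norm{\llbracket\mathcal{H},a\rrbracket[b']}_{C^0(\T)}\leq C\norm{a}_{C^{2,\alpha}(\T)}\norm{b}_{C^{1,\alpha}(\T)}$. Since by definition $\norm{f}_{C^{1,\alpha}}=\norm{f}_{C^0}+\norm{\partial_x f}_{C^\alpha}$, it remains only to estimate $\partial_x\llbracket\mathcal{H},a\rrbracket[b']$ in $C^\alpha(\T)$.

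The difficulty is that differentiating in $x$ naively produces the term $b''$, which does not exist for $b\in C^{1,\alpha}$. To bypass this I would use the identity $\partial_x\mathcal{H}=\mathcal{H}\partial_x=\Lambda$ (all three being the Fourier multiplier $|k|$ on $\T$) to rewrite
\[
\partial_x\llbracket\mathcal{H},a\rrbracket[b']=\Lambda(ab')-a'\,\mathcal{H}(b')-a\,\Lambda(b')=\llbracket\Lambda,a\rrbracket[b']-a'\,\mathcal{H}[b'].
\]
This trades the nonexistent $b''$ for the order-zero commutator $\llbracket\Lambda,a\rrbracket$, which only involves $b'\in C^\alpha(\T)$. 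To make this rigorous I would first prove the identity and the estimate for smooth $a,b$, where every manipulation is classical and the constant depends only on $\norm{a}_{C^{2,\alpha}}$ and $\norm{b}_{C^{1,\alpha}}$, and then pass to general $a,b$ by density of smooth functions in the Hölder scale. The term $a'\mathcal{H}[b']$ is harmless: by the algebra property of $C^\alpha(\T)$ and the boundedness of $\mathcal{H}$ on $C^\alpha(\T)$ (Privalov's theorem), $\norm{a'\,\mathcal{H}[b']}_{C^\alpha}\leq C\norm{a'}_{C^\alpha}\norm{\mathcal{H}[b']}_{C^\alpha}\leq C\norm{a}_{C^{2,\alpha}}\norm{b}_{C^{1,\alpha}}$.

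The heart of the matter is then the commutator $\llbracket\Lambda,a\rrbracket[b']$, whose kernel representation is
\[
\llbracket\Lambda,a\rrbracket[b'](x)=\frac{1}{4\pi}\, p.v. \int_{-\pi}^{\pi}\frac{\left(a(x)-a(x-z)\right)\,b'(x-z)}{\sin^2(z/2)}\,dz.
\]
I would split the numerator through the Taylor expansion $a(x)-a(x-z)=a'(x)z+R(x,z)$, with $R(x,z)=a(x)-a(x-z)-a'(x)z$. The linear part produces $a'(x)\,T[b'](x)$, where $T$ is the operator with odd kernel $z/\sin^2(z/2)=4/z+(\text{smooth})$; hence $T$ is a Calderón--Zygmund operator bounded on $C^\alpha(\T)$, and the product is controlled as above. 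For the remainder part, the Taylor bounds $|R(x,z)|\leq C\norm{a}_{C^2(\T)}z^2$ and $|\partial_x R(x,z)|=\left|\int_0^z\left(a''(x-s)-a''(x)\right)ds\right|\leq C\norm{a''}_{C^\alpha(\T)}|z|^{1+\alpha}$ show that $R(x,z)/\sin^2(z/2)$ is a bounded kernel which is Hölder-$\alpha$ in $x$ with the integrable weight $|z|^{\alpha-1}$; estimating the increment of the associated integral operator at two points $x_1,x_2$ with $h=|x_1-x_2|$, and splitting the $z$-integral into $|z|\leq h$ (using $|R|\leq C\norm{a}_{C^2}z^2$) and $|z|>h$ (using the Lipschitz-in-$x$ bound on $R$, together with the Hölder increment of $b'$), then yields the bound $C\norm{a}_{C^{2,\alpha}}\norm{b'}_{C^\alpha}$. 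Collecting the three contributions proves \eqref{est:commutator}. I expect this last step to be the main obstacle: unlike $\llbracket\mathcal{H},a\rrbracket$, the commutator $\llbracket\Lambda,a\rrbracket$ carries a genuine $1/z$ (Calderón--Zygmund) singularity, and it is precisely the Hölder continuity of the remainder kernel---quantified by the second Taylor estimate above---that forces the $C^{2,\alpha}$ regularity assumption on $a$.
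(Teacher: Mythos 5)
Your proposal is correct, but it follows a genuinely different route from the paper's proof. The paper stays entirely at the level of the singular integral: it integrates by parts in the principal value to transfer the derivative off $b'$, obtaining a representation of $\llbracket\mathcal{H},a\rrbracket[b']$ involving only $a$, $a'$ and differences of $b$; it then differentiates that representation in $x$ and estimates the $C^{\alpha}$ seminorm by hand, via an inner/outer splitting at scale $|y|\sim 2|h|$ together with repeated addition and subtraction of intermediate terms. You instead move the derivative through the commutator algebraically, using $\partial_x\mathcal{H}=\mathcal{H}\partial_x=\Lambda$ to obtain $\partial_x\llbracket\mathcal{H},a\rrbracket[b']=\llbracket\Lambda,a\rrbracket[b']-a'\mathcal{H}[b']$ (this identity is correct; it can be checked mode by mode on Fourier coefficients), and then treat the order-zero commutator $\llbracket\Lambda,a\rrbracket[b']$ by Taylor expansion of its kernel. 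This reduces the lemma to three standard ingredients: Privalov's theorem, the algebra property of $C^{\alpha}(\T)$, and an elementary estimate for an operator whose kernel $R(x,z)/\sin^2(z/2)$ is bounded and H\"older in $x$ with integrable weight $|z|^{\alpha-1}$; your splitting of the increment at $|z|\sim h$ is the same mechanism as the paper's inner/outer decomposition, applied to a cleaner object. What you gain is modularity and a transparent accounting of where $a\in C^{2,\alpha}$ enters (only in the bound $\abs{\partial_x R(x,z)}\leq C\norm{a}_{C^{2,\alpha}(\T)}|z|^{1+\alpha}$); what the paper gains is self-containedness, since it never invokes the boundedness of $\mathcal{H}$ or of any Calder\'on--Zygmund operator on H\"older spaces as a black box.

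Two technical points in your write-up need repair, though neither is fatal. First, the limiting argument is stated incorrectly: smooth functions are \emph{not} dense in $C^{2,\alpha}(\T)$ or $C^{1,\alpha}(\T)$ (their closure is the little H\"older space), so you cannot literally ``pass to general $a,b$ by density.'' The standard fix is to mollify, so that $\norm{a_\epsilon}_{C^{2,\alpha}(\T)}\leq\norm{a}_{C^{2,\alpha}(\T)}$, $\norm{b_\epsilon}_{C^{1,\alpha}(\T)}\leq\norm{b}_{C^{1,\alpha}(\T)}$, while $a_\epsilon\to a$ in $C^{2}$ and $b_\epsilon\to b$ in $C^{1}$; then $\llbracket\mathcal{H},a_\epsilon\rrbracket[b_\epsilon']\to\llbracket\mathcal{H},a\rrbracket[b']$ uniformly (the kernels are bounded), the derivatives are bounded in $C^{\alpha}$ uniformly in $\epsilon$, and Arzel\`a--Ascoli together with lower semicontinuity of the H\"older seminorm under uniform convergence yields the estimate for $(a,b)$. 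Second, in the intermediate chain $\partial_x\llbracket\mathcal{H},a\rrbracket[b']=\Lambda(ab')-a'\mathcal{H}(b')-a\Lambda(b')$ the terms $\Lambda(ab')$ and $a\Lambda(b')$ are not individually defined pointwise when $b'$ is merely $C^{\alpha}$ (the integral defining $\Lambda$ diverges on such functions); only their difference makes sense, through the principal value kernel in which the factor $a(x)-a(x-z)$ restores convergence. So the identity should be formulated and proved directly for the combination $\llbracket\Lambda,a\rrbracket[b']$ --- for instance on the mollified functions, where every term is classical, and then passed to the limit as above.
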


\begin{proof}
In order to ease the notation, we denote by $\Theta(x):= \llbracket\mathcal{H},a\rrbracket [b'](x)$. Using the definition of $\mathcal{H}$ in \eqref{def:Hilbert} we readily check that
\begin{align*}
\Theta(x)=\frac{1}{2\pi}\textnormal{p.v.}\int_{-\pi}^{\pi} \frac{\left(a(y)-a(x)\right)}{\tan\left(\frac{x-y}{2}\right)}b'(y)dy=-\frac{1}{2\pi}\textnormal{p.v.}\int_{-\pi}^{\pi} \frac{\left(a(y)-a(x)\right)}{\tan\left(\frac{x-y}{2}\right)}\left(b(x)-b(y)\right)'dy.
\end{align*}
Using integration by parts, we have that
\[ \Theta(x)=\frac{1}{2\pi}\textnormal{p.v.}\int_{-\pi}^{\pi} \left( \frac{a'(y)}{\tan(\frac{x-y}{2})}+\frac{\left(a(y)-a(x)\right)}{2\sin^{2}(\frac{x-y}{2})}\right)\left(b(x)-b(y)\right)dy. \]
Using the change of variable $\tilde{y}=x-y$ and further manipulation yields
\[ \Theta(x)=\frac{1}{4\pi}\textnormal{p.v.}\int_{-\pi}^{\pi} \left( \frac{2a'(x-\tilde{y})\sin(\frac{\tilde{y}}{2})\cos(\frac{\tilde{y}}{2})-\left(a(x)-a(x-\tilde{y})\right)}{\sin^{2}(\frac{\tilde{y}}{2})}\right)\left(b(x)-b(x-\tilde{y})\right)d\tilde{y}. \]
From now on, we will just write $y$ instead of $\tilde{y}$. Taking into account that 
\begin{equation}\label{simpli}
 \frac{1}{\sin^{2}(y)}\leq g(|y|)\frac{1}{y^{2}}, \quad 0<|y|<\pi, \quad \abs{\sin(y)-y}\leq \frac{1}{6}|y|^{3}, \quad y\in\mathbb{R},
 \end{equation}
where $g(|y|): (0,\pi)\to [0,\infty)$ is a bounded function, we find that
\[ |\Theta(x)|\leq C \textnormal{p.v.}\int_{-\pi}^{\pi} \left( \frac{\norm{a}_{C^{1}(\T)}|y|+\abs{\left(a(x)-a(x-y)\right)}}{y^{2}}\right)\abs{\left(b(x)-b(x-y)\right)} g(|y|) dy+ \textrm{l.o.t} \]
Therefore, since $g(|y|)$ is a bounded function and
\begin{equation}\label{simpli2}
|a(x)-a(x-y)|\leq |y| \norm{a}_{C^{1}(\T)}, \quad \abs{b(x)-b(x-y)}\leq |y|^{\alpha}\norm{b}_{C^{\alpha}(\T)},
\end{equation}
we have that
\begin{equation}\label{estimate:Linfty}
\norm{\Theta}_{C^0(\T)}\leq C \norm{a}_{C^{1}(\T)}\norm{b}_{C^{\alpha}(\T)} \textnormal{p.v.}\int_{-\pi}^{\pi}\frac{1}{|y|^{1-\alpha}}g(|y|) dy \leq C\norm{a}_{C^{1}(\T)}\norm{b}_{C^{\alpha}(\T)}.  
 \end{equation}
To compute the higher order norm, we first notice that
 \begin{align*}
 \Theta'(x)&=\frac{1}{4\pi}\textnormal{p.v.}\int_{-\pi}^{\pi} \left( \frac{2a''(x-\tilde{y})\sin(\frac{\tilde{y}}{2})\cos(\frac{\tilde{y}}{2})-\left(a'(x)-a'(x-\tilde{y})\right)}{\sin^{2}(\frac{\tilde{y}}{2})}\right)\left(b(x)-b(x-\tilde{y})\right)d\tilde{y} \nonumber \\
&\quad + \frac{1}{4\pi}\textnormal{p.v.}\int_{-\pi}^{\pi} \left( \frac{2a'(x-\tilde{y})\sin(\frac{\tilde{y}}{2})\cos(\frac{\tilde{y}}{2})-\left(a(x)-a(x-\tilde{y})\right)}{\sin^{2}(\frac{\tilde{y}}{2})}\right)\left(b'(x)-b'(x-\tilde{y})\right)d\tilde{y}.
 \end{align*}
To compute the $C^{\alpha}$ norm for $\Theta'(x)$, it is convenient to introduce the difference notation
\[ \Delta_{h}f:= f(x+h)-f(x+h-y), \quad  \Delta_{y}f:=f(x)-f(x-y).\]
Hence, calculating the H\"older difference for $h>0$ yields
\begin{align}\label{diff:hold}
\Theta'(x+h)-\Theta'(x)&=\frac{1}{4\pi}\textnormal{p.v.}\int_{-\pi}^{\pi} \frac{1}{\sin^{2}(\frac{y}{2})}  \bigg[ \Delta_{h} b \left(2a''(x+h-y)\sin(\frac{y}{2})\cos(\frac{y}{2})-\Delta_{h}a' \right) \nonumber \\
&\hspace{3.2cm}- \Delta_{y}b\left(2a''(x-y) \sin(\frac{y}{2})\cos(\frac{y}{2})-\Delta_{y}a'\right)\bigg] dy \nonumber \\
&\quad + \frac{1}{4\pi}\textnormal{p.v.}\int_{-\pi}^{\pi} \frac{1}{\sin^{2}(\frac{y}{2})}  \bigg[ \Delta_{h} b' \left(2a'(x+h-y)\sin(\frac{y}{2})\cos(\frac{y}{2})-\Delta_{h}a \right) \nonumber \\
&\hspace{2.5cm}- \Delta_{y}b'\left(2a'(x-y) \sin(\frac{y}{2})\cos(\frac{y}{2})-\Delta_{y}a\right)\bigg] dy=\mathsf{I}_{1}+\mathsf{I}_{2}.
\end{align} 
Adding and subtracting $\Delta_{h} b \left(2a''(x-y) \sin(\frac{y}{2})\cos(\frac{y}{2})-\Delta_{y}a'\right)$ in $\mathsf{I}_{1}$ we find that $\mathsf{I}_{1}=\mathsf{I}_{11}+\mathsf{I}_{12}$ 
where
\begin{align*}
\mathsf{I}_{11}&=\frac{1}{4\pi}\textnormal{p.v.}\int_{-\pi}^{\pi} \frac{1}{\sin^{2}(\frac{y}{2})}\Delta_{h} b   \bigg[ \left(2a''(x+h-y)\sin(\frac{y}{2})\cos(\frac{y}{2})-\Delta_{h}a'\right) \\
&\hspace{7cm} -\left(2a''(x-y) \sin(\frac{y}{2})\cos(\frac{y}{2})-\Delta_{y}a'\right)\bigg] dy, \\
\mathsf{I}_{12}&=\frac{1}{4\pi}\textnormal{p.v.}\int_{-\pi}^{\pi} \frac{1}{\sin^{2}(\frac{y}{2})} \bigg[2a''(x-y) \sin(\frac{y}{2})\cos(\frac{y}{2})-\Delta_{y}a'\bigg] \left(\Delta_{h} b -\Delta_{y}b \right) dy.
\end{align*}
Similarly, adding and subtracting $\Delta_{h} b' \left(2a'(x-y) \sin(\frac{y}{2})\cos(\frac{y}{2})-\Delta_{y}a\right)$ in $\mathsf{I}_{2}$ we find that $\mathsf{I}_{1}=\mathsf{I}_{21}+\mathsf{I}_{22}$ 
where
\begin{align*}
\mathsf{I}_{21}&=\frac{1}{4\pi}\textnormal{p.v.}\int_{-\pi}^{\pi} \frac{1}{\sin^{2}(\frac{y}{2})}\Delta_{h} b'   \bigg[ \left(2a'(x+h-y)\sin(\frac{y}{2})\cos(\frac{y}{2})-\Delta_{h}a\right) \\
&\hspace{7cm} -\left(2a'(x-y) \sin(\frac{y}{2})\cos(\frac{y}{2})-\Delta_{y}a\right)\bigg] dy, \\
\mathsf{I}_{22}&=\frac{1}{4\pi}\textnormal{p.v.}\int_{-\pi}^{\pi} \frac{1}{\sin^{2}(\frac{y}{2})} \bigg[2a'(x-y) \sin(\frac{y}{2})\cos(\frac{y}{2})-\Delta_{y}a\bigg] \left(\Delta_{h} b' -\Delta_{y}b' \right) dy.
\end{align*}
Thus
\[
\Theta'(x+h)-\Theta'(x)= \mathsf{I}_{11}+\mathsf{I}_{12}+\mathsf{I}_{21}+\mathsf{I}_{22}.
\]
We will just bound the first two integrals $\mathsf{I}_{11}, \mathsf{I}_{12}$. The remaining terms $\mathsf{I}_{21}, \mathsf{I}_{22}$ can be estimated in a similar fashion. Let us start with $\mathsf{I}_{12}$. We write
\begin{align*}
\mathsf{I}_{12}&=\frac{1}{4\pi}\textnormal{p.v.}\int_{-\pi}^{\pi} \frac{1}{\sin^{2}(\frac{y}{2})} \bigg[a''(x-y) y\cos(\frac{y}{2})-\Delta_{y}a'\bigg] \left(\Delta_{h} b -\Delta_{y}b \right) dy  \\
&\quad -\frac{1}{4\pi}\textnormal{p.v.}\int_{-\pi}^{\pi} \frac{1}{\sin^{2}(\frac{y}{2})} \bigg[a''(x-y)\left( \sin(\frac{y}{2})-\frac{y}{2}\right)\cos(\frac{y}{2})-\Delta_{y}a'\bigg] \left(\Delta_{h} b -\Delta_{y}b \right) dy.
\end{align*}
We just bound the first integral above, since using \eqref{simpli} the later is even easier to control and the same estimate follows. To that purpose we first notice that
\[ 
\Delta_{y}a'=y\int_{0}^{1} a''(\lambda x+(1-\lambda)(x-y)) \ d\lambda,
\]
and hence
\begin{align}\label{bound:extra}
 \abs{a''(x-y) y\cos(\frac{y}{2})-\Delta_{y}a'}&\leq \abs{(1-\cos(\frac{y}{2}))}\abs{y}\int_{0}^{1} \abs{\left(a''(\lambda x+(1-\lambda)(x-y)-a''(x-y)\right)} \ d\lambda \nonumber \\
 &\leq C \abs{y}^{1+\alpha}\norm{a'}_{C^{1,\alpha}(\T)}\leq C \abs{y}^{1+\alpha}\norm{a}_{C^{2,\alpha}(\T)}.
 \end{align}
Thus, combining \eqref{simpli},\eqref{simpli2} together with \eqref{bound:extra} we infer that
\begin{equation}\label{estimate:II}
\mathsf{I}_{12}\leq C\norm{a}_{C^{2,\alpha}(\T)}\norm{b}_{C^{\alpha}(\T)} h^{\alpha}\textnormal{p.v.}\int_{-\pi}^{\pi} \frac{1}{|y|^{1-\alpha}} dy \leq C\norm{a}_{C^{2,\alpha}(\T)}\norm{b}_{C^{\alpha}(\T)} h^{\alpha}.
\end{equation}
Next, let us bound $\mathsf{I}_{11}$. We split the integral as
\begin{align*}
 \mathsf{I}_{11}= \frac{1}{4\pi}\textnormal{p.v.}\int_{-2|h|}^{2|h|} [\ldots] \ dy + \frac{1}{4\pi}\textnormal{p.v.}\int_{(-\pi,-2|h|)\cup (2|h|,\pi)}  [\ldots] \ dy:=\mathsf{J}_{in}+\mathsf{J}_{out}.
 \end{align*}
Again we can add and subtract $\frac{y}{2}$ and write
\begin{align*}
\mathsf{J}_{in}&=\frac{1}{4\pi}\textnormal{p.v.}\int_{-2|h|}^{2|h|} \frac{1}{\sin^{2}(\frac{y}{2})}\Delta_{h} b   \bigg[ \left(a''(x+h-y)y\cos(\frac{y}{2})-\Delta_{h}a'\right)-\left(a''(x-y) y\cos(\frac{y}{2})-\Delta_{y}a'\right)\bigg]  dy\\
&\quad +\frac{1}{4\pi}\textnormal{p.v.}\int_{-2|h|}^{2|h|} \frac{1}{\sin^{2}(\frac{y}{2})}\Delta_{h} b   \bigg[ \left(a''(x+h-y) \left(\sin(\frac{y}{2})-y\right)\cos(\frac{y}{2})-\Delta_{h}a'\right)\\
&\hspace{7cm}-\left(a''(x-y)  \left(\sin(\frac{y}{2})-y\right)\cos(\frac{y}{2})-\Delta_{y}a'\right) \bigg] dy.
\end{align*}
Similarly as before, using \eqref{simpli} the second integral above less singular easier to control. Therefore, using \eqref{simpli}-\eqref{simpli2} and writing again
\begin{equation}\label{truqui}
\Delta_{y}a'=y\int_{0}^{1} a''(\lambda x+(1-\lambda)(x-y)) \ d\lambda, \quad \Delta_{h}a'=y\int_{0}^{1} a''(\lambda (x+h)+(1-\lambda)(x+h-y)) \ d\lambda,
\end{equation}
we readily check that
\begin{align}\label{estimate:in}
\mathsf{J}_{in}\leq C\norm{b}_{C^{\alpha}(\T)}\norm{a}_{C^{2}(\T)} \textnormal{p.v.}\int_{-2|h|}^{2|h|} \frac{|y|^{1+\alpha}}{y^{2}} dy &\leq  C\norm{b}_{C^{\alpha}(\T)}\norm{a}_{C^{2}(\T)}\textnormal{p.v.}\int_{-2|h|}^{2|h|} \frac{1}{y^{1-\alpha}} dy \nonumber \\
&\leq  C\norm{b}_{C^{\alpha}(\T)}\norm{a}_{C^{2}(\T)}h^{\alpha}.
\end{align}
To bound the outer part $\mathsf{J}_{out}$, we perform a similar estimate. First, we add and subtract $\frac{y}{2}$ and write
\begin{align*}
\mathsf{J}_{out}&=\frac{1}{4\pi}\int_{(-\pi,-2|h|)\cup (2|h|,\pi)} \frac{1}{\sin^{2}(\frac{y}{2})}\Delta_{h} b   \bigg[ \left(a''(x+h-y)y\cos(\frac{y}{2})-\Delta_{h}a'\right) \\
&\hspace{8cm}-\left(a''(x-y) y\cos(\frac{y}{2})-\Delta_{y}a'\right)\bigg]  dy\\
&\quad +\frac{1}{4\pi}\int_{(-\pi,-2|h|)\cup (2|h|,\pi)} \frac{1}{\sin^{2}(\frac{y}{2})}\Delta_{h} b   \bigg[ \left(a''(x+h-y) \left(\sin(\frac{y}{2})-y\right)\cos(\frac{y}{2})-\Delta_{h}a'\right)\\
&\hspace{7cm}-\left(a''(x-y)  \left(\sin(\frac{y}{2})-y\right)\cos(\frac{y}{2})-\Delta_{y}a'\right) \bigg] dy.
\end{align*}
Once again we just bound the former integral, being the later less singular. On the one hand, we find that
\begin{align}\label{cancel1}
\abs{a''(x+h-y)y\cos(\frac{y}{2})-a''(x-y) y\cos(\frac{y}{2})}\leq \norm{a}_{C^{2,\alpha}(\T)} |y| h^{\alpha}.
\end{align}
On the other hand, recalling \eqref{truqui}, we can write
\[
\Delta_{y}a'-\Delta_{h}a'=y \int_{0}^{1}\left(a''(\lambda x+(1-\lambda)(x-y))-a''(\lambda (x+h)+(1-\lambda)(x+h-y)) \right) d\lambda,
\]
and hence
\begin{equation}\label{cancel2}
 \abs{\Delta_{y}a'-\Delta_{h}a'}\leq |y|  \norm{a}_{C^{2,\alpha}(\T)}h^{\alpha}.
 \end{equation}
 Combining \eqref{cancel1}-\eqref{cancel2} together with \eqref{simpli}-\eqref{simpli2} we conclude that
 \begin{align}\label{estimate:out}
\mathsf{J}_{out}&\leq C \norm{a}_{C^{2,\alpha}(\T)}\norm{b}_{C^{\alpha}(\T)} h^{\alpha} \int_{(-\pi,-2|h|)\cup (2|h|,\pi)} \frac{1}{|y|^{1-\alpha}} \ dy \leq C \norm{a}_{C^{2,\alpha}(\T)}\norm{b}_{C^{\alpha}(\T)} h^{\alpha}.
\end{align}
Therefore, collecting estimates \eqref{estimate:II}, \eqref{estimate:in} and \eqref{estimate:out} we have shown that
\begin{equation}
\abs{\mathsf{I}_{11}+\mathsf{I}_{12}}\leq C \norm{a}_{C^{2,\alpha}(\T)}\norm{b}_{C^{\alpha}(\T)} h^{\alpha}.
\end{equation}
Repeating the same estimates for $\mathsf{I}_{21}+\mathsf{I}_{22}$ one can readily check that
\begin{equation}
\abs{\mathsf{I}_{21}+\mathsf{I}_{22}}\leq C \norm{a}_{C^{1,\alpha}(\T)}\norm{b}_{C^{1,\alpha}(\T)} h^{\alpha}.
\end{equation}
Therefore, this concludes that 
\[ \abs{\Theta'(x+h)-\Theta'(x)}\leq C \norm{a}_{C^{2,\alpha}(\T)}\norm{b}_{C^{1,\alpha}(\T)} h^{\alpha},\]
and thus together with  \eqref{estimate:Linfty} we have shown that
\[ \norm{\Theta}_{C^{1,\alpha}(\T)}\leq \norm{a}_{C^{2,\alpha}(\T)}\norm{b}_{C^{1,\alpha}(\T)},\]
proving the desired result.
\end{proof}

To conclude this section, we recall the Crandall-Rabinowitz Theorem which is a fundamental tool in bifurcation theory that will be used to provide the main result of this article. To that purpose, let us first recall the following definition:
\begin{defi}[Fredholm operator]
Let $X$ and $Y$ be  two Banach spaces. A continuous linear mapping $T:X\rightarrow Y,$  is a  Fredholm operator if it fulfills the following properties,
\begin{enumerate}
\item $\textnormal{dim Ker}\,  T<\infty$,
\item $\textnormal{Im}\, T$ is closed in $Y$,
\item $\textnormal{codim Im}\,  T<\infty$.
\end{enumerate}
The integer $\textnormal{dim Ker}\, T-\textnormal{codim Im}\, T$ is called the Fredholm index of $T$.
\end{defi}

Next, we shall discuss  the index persistence through compact perturbations, cf. \cite{Kato, Kielhofer}.
\begin{pro}\label{prop:compact}
The index of a Fredholm operator remains unchanged under compact perturbations.
\end{pro}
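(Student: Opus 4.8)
The plan is to establish the stronger statement that if $T\colon X \to Y$ is Fredholm and $K\colon X \to Y$ is compact, then $T+K$ is again Fredholm with $\operatorname{ind}(T+K)=\operatorname{ind}(T)$. I would proceed via Atkinson's characterization of Fredholm operators together with the local constancy of the index on the open set of Fredholm operators.

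First I would recall Atkinson's theorem: a bounded operator $T$ is Fredholm if and only if it admits a parametrix, that is, a bounded operator $S\colon Y \to X$ such that $ST-\operatorname{Id}_X$ and $TS-\operatorname{Id}_Y$ are both compact. Given such an $S$ for $T$, one observes that
\[
S(T+K)-\operatorname{Id}_X = (ST-\operatorname{Id}_X)+SK, \qquad (T+K)S-\operatorname{Id}_Y = (TS-\operatorname{Id}_Y)+KS,
\]
and since $SK$ and $KS$ are compact (composition of a bounded operator with a compact one), the right-hand sides are compact. Hence the same $S$ is a parametrix for $T+K$, and Atkinson's theorem yields that $T+K$ is Fredholm, so that properties (1)--(3) in the definition hold for the perturbed operator.

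To compare the indices, I would use that the set $\Phi(X,Y)$ of Fredholm operators is open in $\mathcal{L}(X,Y)$ and that $\operatorname{ind}\colon \Phi(X,Y)\to\mathbb{Z}$ is locally constant. The key lemma is the perturbation estimate: if $T_0$ is Fredholm, there is $\delta>0$ such that every $R$ with $\norm{R-T_0}<\delta$ is Fredholm with $\operatorname{ind}(R)=\operatorname{ind}(T_0)$. I would prove this by decomposing $X=\ker T_0\oplus X_1$ and $Y=\operatorname{Im}T_0\oplus Y_1$ (both complements are available because $\ker T_0$ is finite-dimensional and $\operatorname{Im}T_0$ is closed of finite codimension), writing $R$ in block form with respect to these splittings, noting that the $X_1\to\operatorname{Im}T_0$ block remains invertible for $\delta$ small, and performing a Schur-complement reduction that expresses $\operatorname{ind}(R)$ through a finite-dimensional linear map between $\ker T_0$ and $Y_1$; the index of that map equals $\dim\ker T_0-\dim Y_1=\operatorname{ind}(T_0)$.

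Finally, I would connect $T$ to $T+K$ by the affine path $t\mapsto T+tK$ for $t\in[0,1]$. By the first step every $T+tK$ is Fredholm, so the path lies entirely in $\Phi(X,Y)$; since $t\mapsto T+tK$ is norm-continuous and $\operatorname{ind}$ is locally constant, the integer-valued map $t\mapsto\operatorname{ind}(T+tK)$ is constant on the connected interval $[0,1]$, and evaluating at the endpoints gives $\operatorname{ind}(T)=\operatorname{ind}(T+K)$. I expect the main obstacle to be the perturbation lemma establishing local constancy of the index: the block decomposition itself is routine, but verifying that the Schur reduction preserves the relevant finite dimensions, and hence the index, requires the careful bookkeeping that forms the technical core of the argument.
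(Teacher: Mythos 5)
Your proposal is correct, but note that the paper does not actually prove this proposition: it is stated as a known result and justified only by the citations to Kato and Kielh\"ofer, so there is no internal argument to compare against. What you have written is essentially the classical proof found in those references, and it is sound in all three steps: (i) Atkinson's parametrix characterization immediately gives that $T+K$ is Fredholm, since $SK$ and $KS$ are compact whenever $S$ is bounded and $K$ is compact; (ii) the local constancy of the index, proved via the splittings $X=\Ker T_0\oplus X_1$, $Y=\Img T_0\oplus Y_1$ (available because $\Ker T_0$ is finite-dimensional and $\Img T_0$ is closed of finite codimension) and the Schur reduction to a finite-dimensional map $\Ker T_0\to Y_1$ of index $\dim\Ker T_0-\dim Y_1=\operatorname{ind}(T_0)$; and (iii) the homotopy $t\mapsto T+tK$, which stays in the Fredholm class by step (i), so the integer-valued, locally constant index is constant on the connected interval $[0,1]$. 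The one point worth making explicit in step (ii) is that the $X_1\to\Img T_0$ block of $T_0$ is an isomorphism by the open mapping theorem, which is what guarantees that the corresponding block of any nearby $R$ remains invertible and that the Schur complement is well defined; with that remark included, your argument is a complete and self-contained proof of a statement the paper simply imports from the literature.
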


Now, we recall the classical Crandall-Rabinowitz Theorem whose proof can be found  in \cite{Crandall-Rabi}.

\begin{theo}[Crandall-Rabinowitz Theorem]\label{CR}
    Let $X, Y$ be two Banach spaces, $V$ be a neighborhood of $0$ in $X$ and $F:\mathbb{R}\times V\rightarrow Y$ be a function with the properties,
    \begin{enumerate}
        \item $F(\lambda,0)=0$ for all $\lambda\in\mathbb{R}$.
        \item The partial derivatives  $\partial_\lambda F$, $\partial_fF$ and  $\partial_{\lambda}\partial_fF$ exist and are continuous.
        \item The operator $\partial_f F(\lambda_{0},0)$ is Fredholm of zero index and $\textnormal{Ker}(F_f(\lambda_{0},0))=\langle f_0\rangle$ is one-dimensional. 
                \item  Transversality assumption: $\partial_{\lambda}\partial_fF(\lambda_{0},0)f_0 \notin \textnormal{Im}(\partial_fF(\lambda_{0},0))$.
    \end{enumerate}
    If $Z$ is any complement of  $\textnormal{Ker}(\partial_fF(\lambda_{0},0))$ in $X$, then there is a neighborhood  $U$ of $(\lambda_{0},0)$ in $\mathbb{R}\times X$, an interval  $(-a,a)$, and two continuous functions $\Phi:(-a,a)\rightarrow\mathbb{R}$, $\beta:(-a,a)\rightarrow Z$ such that $\Phi(0)=\lambda_{0}$ and $\beta(0)=0$ and
    $$F^{-1}(0)\cap U=\{(\Phi(s), s f_0+s\beta(s)) : |s|<a\}\cup\{(\lambda,0): (\lambda,0)\in U\}.$$
\end{theo}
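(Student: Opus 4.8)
The plan is to convert the equation $F(\lambda,f)=0$ into a problem to which the implicit function theorem applies, through a \emph{Lyapunov--Schmidt reduction}. I begin by exploiting the Fredholm hypothesis (3): since $L_0:=\partial_f F(\lambda_0,0)$ has index zero with $\Ker L_0=\langle f_0\rangle$ one-dimensional, its range $\Img L_0$ is closed of codimension one. I fix the splitting $X=\langle f_0\rangle\oplus Z$ and, setting $v:=\partial_\lambda\partial_f F(\lambda_0,0)f_0$, I note that the transversality assumption (4) says precisely $v\notin\Img L_0$, so that $Y=\Img L_0\oplus\langle v\rangle$. The central idea is to look for the nontrivial branch in the rescaled form $f=s(f_0+z)$ with $s\in\mathbb{R}$ and $z\in Z$, and to divide the equation by $s$, thereby removing the trivial branch $\{(\lambda,0)\}$ from the outset.

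Concretely, I introduce
\[
\mathcal{G}(s,\lambda,z):=\int_0^1 \partial_f F\bigl(\lambda,\,ts(f_0+z)\bigr)(f_0+z)\,dt,
\]
which is defined near $(0,\lambda_0,0)$ in $\mathbb{R}\times\mathbb{R}\times Z$. By hypothesis (1) and the fundamental theorem of calculus, $\mathcal{G}(s,\lambda,z)=s^{-1}F(\lambda,s(f_0+z))$ whenever $s\neq0$, so that $\mathcal{G}=0$ is the equation with the trivial branch factored out, while $\mathcal{G}(0,\lambda,z)=\partial_f F(\lambda,0)(f_0+z)$. The integrand is continuously differentiable by hypothesis (2), and in particular the continuity of the mixed derivative $\partial_\lambda\partial_f F$ lets one differentiate under the integral sign; hence $\mathcal{G}$ is $C^1$ with $\mathcal{G}(0,\lambda_0,0)=L_0 f_0=0$. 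The partial derivative of $\mathcal{G}$ in $(\lambda,z)$ at the base point is the bounded linear map
\[
(\dot\lambda,\dot z)\longmapsto \dot\lambda\,\partial_\lambda\partial_f F(\lambda_0,0)f_0+L_0\dot z=\dot\lambda\,v+L_0\dot z ,
\]
and since $L_0|_Z:Z\to\Img L_0$ is an isomorphism (a bounded bijection from a complement of the kernel onto the closed range, with bounded inverse by the open mapping theorem) while $v$ spans a complement of $\Img L_0$, this map is an isomorphism of $\mathbb{R}\times Z$ onto $Y$.

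The implicit function theorem then produces, on some interval $(-a,a)$, functions $s\mapsto(\Phi(s),\beta(s))\in\mathbb{R}\times Z$ with $\Phi(0)=\lambda_0$ and $\beta(0)=0$ solving $\mathcal{G}(s,\Phi(s),\beta(s))=0$; they inherit the regularity coming from $F$ through the reduction, so in particular they are continuous. For $s\neq0$ this means exactly that $f=sf_0+s\beta(s)$ solves $F(\Phi(s),f)=0$, which is the asserted curve. To check that, together with the trivial branch, this exhausts $F^{-1}(0)\cap U$, I would write an arbitrary small solution uniquely as $f=sf_0+\zeta$ with $\zeta\in Z$: if $s\neq0$, the rescaling $z=\zeta/s$ satisfies $\mathcal{G}(s,\lambda,z)=0$ and the uniqueness clause of the implicit function theorem forces $(\lambda,z)=(\Phi(s),\beta(s))$; if $s=0$, then $f=\zeta\in Z$ solves the projected equation $Q F(\lambda,\zeta)=0$, with $Q$ the projection onto $\Img L_0$, whose only small solution is $\zeta=0$ again by injectivity of $L_0|_Z$ and the implicit function theorem.

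I expect the main obstacle to be establishing the \textbf{$C^1$ regularity of $\mathcal{G}$ across $s=0$}, where the quotient $s^{-1}F(\lambda,s(f_0+z))$ must be shown to extend to a genuinely differentiable map; this is exactly the purpose of hypothesis (2), and of the continuity of $\partial_\lambda\partial_f F$ in particular, since it is this mixed derivative that governs the $\lambda$-dependence of the integral representation of $\mathcal{G}$. Once $\mathcal{G}$ is known to be $C^1$, the remaining work is the clean verification that transversality (4) upgrades the degenerate linearization $L_0$, which is not invertible on all of $X$, into an isomorphism on the reduced space $\mathbb{R}\times Z$, after which the implicit function theorem completes the argument.
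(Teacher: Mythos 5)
The paper itself does not prove this theorem: it is quoted as a classical result with a citation to Crandall--Rabinowitz's original article, so your proposal has to be measured against the classical argument --- and your strategy (write $f=s(f_0+z)$, divide the equation by $s$, apply the implicit function theorem to the resulting map $\mathcal{G}$) is exactly that argument, with the linear algebra ($Y=\Img L_0\oplus\langle v\rangle$ and the isomorphism $(\dot\lambda,\dot z)\mapsto \dot\lambda\,v+L_0\dot z$) handled correctly. There are, however, two gaps.

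The serious one is the completeness step, i.e.\ that the curve together with the trivial line exhausts $F^{-1}(0)\cap U$. Your dichotomy covers zeros with $s\neq 0$ for which $\norm{\zeta}/|s|$ is small (so that the rescaled point lies in the uniqueness neighborhood of the implicit function theorem) and zeros with $s=0$ exactly (the projected equation), but it says nothing about zeros $f=sf_0+\zeta$ with $0<|s|\ll\norm{\zeta}$: there $z=\zeta/s$ is huge, the uniqueness clause does not apply, and such points could a priori accumulate at $(\lambda_0,0)$. The region your argument controls is a cone around $\langle f_0\rangle$ union the trivial line, which is not a neighborhood of $(\lambda_0,0)$. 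The missing ingredient is the a priori ``cone estimate'': writing $F(\lambda,f)=A(\lambda,f)f$ with $A(\lambda,f):=\int_0^1\partial_f F(\lambda,tf)\,dt$, continuity of $\partial_f F$ gives $\eta:=\norm{A(\lambda,f)-L_0}_{\mathcal{L}(X,Y)}\to 0$ as $(\lambda,f)\to(\lambda_0,0)$, and since $\norm{L_0\zeta}_Y\geq c_0\norm{\zeta}_X$ for $\zeta\in Z$ (injectivity, closed range, open mapping theorem), any zero satisfies $c_0\norm{\zeta}\leq\norm{(A-L_0)(sf_0+\zeta)}\leq \eta\left(|s|\norm{f_0}+\norm{\zeta}\right)$, hence $\norm{\zeta}\leq 2\eta\norm{f_0}|s|/c_0$ once $\eta<c_0/2$. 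This places every zero of a small ball inside the cone, after which your rescaling-plus-uniqueness argument closes. (Alternatively, a genuine Lyapunov--Schmidt reduction --- first solving $QF(\lambda,sf_0+\zeta)=0$ for $\zeta=\psi(\lambda,s)$ --- sidesteps the cone estimate, because every zero of $F$ solves the projected equation.)

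The second gap is the regularity discussion, which you yourself flag as the main obstacle but then resolve incorrectly: the integrand of $\mathcal{G}$ is \emph{not} continuously differentiable in $z$ (nor in $s$) under hypothesis (2) --- that would require $\partial_f^2F$, which is not assumed --- and $\mathcal{G}$ cannot be shown to be $C^1$ in $s$. What is true, and is all that is needed (the implicit function theorem in the form for maps that are continuous and continuously Fr\'echet differentiable in the variables being solved for; this is also why the theorem only asserts continuity of $\Phi$ and $\beta$), is that $\mathcal{G}$ is continuous and that $\partial_{(\lambda,z)}\mathcal{G}$ exists and is continuous. The $\lambda$-partial is as you say, by differentiating under the integral using continuity of $\partial_\lambda\partial_f F$. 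For the $z$-partial one must not differentiate under the integral: for $s\neq 0$ apply the chain rule to $s^{-1}F(\lambda,s(f_0+z))$, and at $s=0$ use that $\mathcal{G}(0,\lambda,\cdot)$ is affine; both cases give the single formula $\partial_z\mathcal{G}(s,\lambda,z)\dot z=\partial_f F(\lambda,s(f_0+z))\dot z$, which is continuous in $(s,\lambda,z)$. With these two repairs your proposal becomes the classical proof.
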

In this context, we will say that $\lambda_{0}$ is an eigenvalue of $F$.

\section{Formulation of the problem}\label{sec:3}

We shall look for traveling waves for $f$ and hence find $\varphi$ such that
$$
f(t,x)=\varphi(x-ct),
$$
for some speed $c\in\mathbb{R}$. Hence, the equation reduces to
$$
F[c,\varphi](\xi)=0, \quad x\in[-\pi,\pi],
$$
where
\begin{align}
F[c,\varphi](\xi)&=2c\varphi'(\xi) +c\alpha_0\Lambda[\varphi'](\xi)+\frac{1}{\varepsilon}\left\{\varphi'(\xi)+\mathcal{H}[\varphi](\xi)+(\alpha_0-\beta)\mathcal{H}[\varphi''](\xi)\right\}+\mathcal{H}[(\Lambda \varphi)^2](\xi)\nonumber \\
&\quad-\llbracket\mathcal{H},\varphi\rrbracket[\Lambda \varphi](\xi)+ (\alpha_0-\beta)\llbracket\mathcal{H},\varphi\rrbracket[\Lambda^3 \varphi](\xi)\nonumber \\
&=2c\varphi'(\xi) +(c\alpha_0+\frac{\alpha_0-\beta}{\varepsilon})\mathcal{H}[\varphi''](\xi)+\frac{1}{\varepsilon}\left\{\varphi'(\xi)+\mathcal{H}[\varphi](\xi)\right\}+\mathcal{H}[(\mathcal{H} \varphi')^2](\xi)\nonumber \\
&\quad \quad -\llbracket\mathcal{H},\varphi\rrbracket[\mathcal{H} \varphi'](\xi) -(\alpha_0-\beta)\llbracket\mathcal{H},\varphi\rrbracket[\mathcal{H} \varphi'''](\xi). \label{definicionF}
\end{align}
Hence, note that we have the following line of trivial solutions:
$$
F[c,0]=0, \quad \mbox{more generally } \ F[c,a]=0, \forall a\in \R.
$$

Define also the functional spaces
\begin{align*}
X:=\left\{f\in C^{3,\alpha}([0,2\pi],\R),\quad f(\xi)=\sum_{k\geq 1}f_k\cos(k\xi)\text{ with norm }\|f\|_{X}=\|f\|_{C^{3,\alpha}}\right\},\\
Y:=\left\{f\in C^{1,\alpha}([0,2\pi],\R),\quad f(\xi)=\sum_{k\geq 1}f_k\sin(k\xi)\text{ with norm }\|f\|_{Y}=\|f\|_{C^{1,\alpha}}\right\}.
\end{align*}

 \subsection{The linearized operator: spectral properties and transversality condition}\label{sec:31}
The first result shows that the operator $F$ defined in \label{definicionF} is well-defined and has the desired regularity. More precisely:
\begin{pro}\label{well:define1}
The operator $F:\mathbb{R}\times X\rightarrow Y$ given in \eqref{definicionF} is well-defined and $\mathscr{C}^1(\mathbb{R}\times X\rightarrow Y)$.
\end{pro}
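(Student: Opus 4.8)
The plan is to treat the two assertions separately — that $F$ maps $\mathbb{R}\times X$ into $Y$, and that it is $\mathscr{C}^1$ — and to reduce everything to three facts: the periodic Hilbert transform $\mathcal{H}$ is bounded on $C^{k,\alpha}(\T)$ for each $k\geq 0$, the Hölder spaces $C^{k,\alpha}(\T)$ are Banach algebras under pointwise products, and the commutator estimate of Lemma \ref{lemma:comutator}. Granting these, every summand of \eqref{definicionF} is either linear in $(c,\varphi)$ or a bounded bilinear expression in $\varphi$, so $F$ is a continuous polynomial map of degree two in $(c,\varphi)$, from which both claims follow.

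First I would verify that $F[c,\varphi]$ is odd, hence a mean-free sine series as the definition of $Y$ requires. On mean-free functions $\mathcal{H}$ interchanges cosine and sine modes, so it sends even functions to odd ones and satisfies $\mathcal{H}^2=-\mathrm{Id}$. Since $\varphi\in X$ is even, $\varphi'$ and $\varphi'''$ are odd while $\varphi''$ and $\mathcal{H}\varphi'=(\mathcal{H}\varphi)'$ are even; a term-by-term inspection then shows each summand of \eqref{definicionF} is odd. For instance $\mathcal{H}[(\mathcal{H}\varphi')^2]$ is the Hilbert transform of an even function, and each commutator $\llbracket\mathcal{H},\varphi\rrbracket[\mathcal{H}\varphi^{(k)}]=\mathcal{H}(\varphi\,\mathcal{H}\varphi^{(k)})-\varphi\,\mathcal{H}(\mathcal{H}\varphi^{(k)})$ is odd, because $\mathcal{H}(\varphi\,\mathcal{H}\varphi^{(k)})$ is the Hilbert transform of an even function and $\varphi\,\mathcal{H}(\mathcal{H}\varphi^{(k)})=-\varphi\,\varphi^{(k)}$ is even times odd.

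The regularity count is routine except for the commutators, which is where the genuine difficulty lies. For $\varphi\in C^{3,\alpha}$ the terms $2c\varphi'$ and $\tfrac1\varepsilon\varphi'$ sit in $C^{2,\alpha}$, the terms $(c\alpha_0+\tfrac{\alpha_0-\beta}{\varepsilon})\mathcal{H}[\varphi'']$ and $\tfrac1\varepsilon\mathcal{H}[\varphi]$ lie in $C^{1,\alpha}$ by boundedness of $\mathcal{H}$, and $\mathcal{H}[(\mathcal{H}\varphi')^2]$ lies in $C^{2,\alpha}$ by the algebra property. The hard part is $(\alpha_0-\beta)\llbracket\mathcal{H},\varphi\rrbracket[\mathcal{H}\varphi''']$, because $\varphi'''$ is only $C^\alpha$ and a direct estimate would lose a derivative. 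The device is to write $\mathcal{H}\varphi'''=(\mathcal{H}\varphi'')'$ and apply Lemma \ref{lemma:comutator} with $a=\varphi\in C^{2,\alpha}$ and $b=\mathcal{H}\varphi''\in C^{1,\alpha}$, which gives
\[
\norm{\llbracket\mathcal{H},\varphi\rrbracket[\mathcal{H}\varphi''']}_{C^{1,\alpha}(\T)}\leq C\norm{\varphi}_{C^{2,\alpha}(\T)}\norm{\mathcal{H}\varphi''}_{C^{1,\alpha}(\T)}\leq C\norm{\varphi}_{C^{3,\alpha}(\T)}^2;
\]
the same argument with $b=\mathcal{H}\varphi\in C^{1,\alpha}$ controls $\llbracket\mathcal{H},\varphi\rrbracket[\mathcal{H}\varphi']$. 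This establishes $F[c,\varphi]\in Y$ together with a bound of the form $\norm{F[c,\varphi]}_{Y}\leq C(1+|c|)(\norm{\varphi}_X+\norm{\varphi}_X^2)$.

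Finally, for the $\mathscr{C}^1$ statement I would exhibit the partial derivatives and invoke the smoothness of bounded polynomial maps. One has $\partial_c F=2\varphi'+\alpha_0\mathcal{H}[\varphi'']$ and $\partial_c\partial_\varphi F[c,\varphi]h=2h'+\alpha_0\mathcal{H}[h'']$, while $\partial_\varphi F[c,\varphi]h$ is obtained by polarizing the quadratic part; its top-order contribution $-(\alpha_0-\beta)\big(\llbracket\mathcal{H},\varphi\rrbracket[\mathcal{H}h''']+\llbracket\mathcal{H},h\rrbracket[\mathcal{H}\varphi''']\big)$ is again controlled in $C^{1,\alpha}$ by Lemma \ref{lemma:comutator} exactly as above. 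Continuity of these maps in $(c,\varphi)$ follows from the boundedness of the underlying linear and bilinear operators, yielding the claimed $\mathscr{C}^1$ regularity. Throughout, the only real obstacle is the derivative loss in the top-order commutator, resolved precisely by Lemma \ref{lemma:comutator}.
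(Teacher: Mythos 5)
Your proposal is correct and follows essentially the same route as the paper: verify the odd symmetry so that $F[c,\varphi]$ lands in $Y$, bound the linear and quadratic terms via boundedness of $\mathcal{H}$ on H\"older spaces and the Banach algebra property, and resolve the derivative loss in $(\alpha_0-\beta)\llbracket\mathcal{H},\varphi\rrbracket[\mathcal{H}\varphi''']$ by applying Lemma \ref{lemma:comutator} with $a=\varphi$, $b=\mathcal{H}\varphi''$, exactly as the paper does. Your $\mathscr{C}^1$ argument (smoothness of bounded polynomial maps, with the bilinear commutator piece controlled by the same lemma) is only a cosmetic repackaging of the paper's verification that the Gateaux derivative is Lipschitz in $\varphi$, hence the Fr\'echet derivative exists and is continuous.
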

\begin{proof}
Let us start checking that $F$ is well-defined. First of all, let us check the symmetry in the spaces. That is, if $\varphi(-\xi)=\varphi(\xi)$, then
$$
F[c,\varphi](-\xi)=-F[c,\varphi](\xi).
$$
Indeed, it is straightforward to check that
$$
\varphi'(-\xi)=-\varphi'(\xi), \quad \varphi''(-\xi)=\varphi''(\xi), \quad \varphi'''(-\xi)=-\varphi'''(\xi).
$$
Furthermore, note that
\begin{align*}
\mathcal{H}[\varphi''](-\xi)=\frac{1}{2\pi} p.v. \int_{-\pi}^\pi \frac{\varphi''(y)}{\tan\left(\frac{-\xi-y}{2}\right)}dy=-\frac{1}{2\pi}p.v. \int_{-\pi}^\pi \frac{\varphi''(-y)}{\tan\left(\frac{\xi-y}{2}\right)}dy=-\mathcal{H}[\varphi''](\xi).
\end{align*}
We can check, in a similar way, that the symmetry property is satisfied by the remaining integral terms. \medskip

Let us move next to the regularity properties for the operator $F$. We first notice that
\begin{align*}
\mathcal{H}[h](\xi)=&\sum_{k\geq 1}h_k\frac{1}{2\pi} p.v.\int_{-\pi}^{\pi}\frac{\cos(k y)}{\tan\left(\frac{\xi-y}{2}\right)}dy\\
=&-\sum_{k\geq 1}h_k\frac{1}{2\pi} p.v.\int_{-\pi}^{\pi}\frac{\cos(k (\xi-z))}{\tan\left(\frac{z}{2}\right)}dz\\
=&\sum_{k\geq 1}h_k\frac{1}{2\pi} p.v.\int_{-\pi}^{\pi}\frac{\sin(k\xi)\sin(kz)-\cos(k\xi)\cos(kz)}{\tan\left(\frac{z}{2}\right)}dz\\
=&\sum_{k\geq 1}h_k\sin(k\xi)\frac{1}{2\pi} p.v.\int_{-\pi}^{\pi}\frac{\sin(k y)}{\tan\left(\frac{y}{2}\right)}dy\\
=&\sum_{k\geq 1}h_k\sin(k\xi),
\end{align*}
and as a consequence we have in particular that $\mathcal{H}: X\to Y$. Furthermore, it is easy to check that the first three terms in \eqref{definicionF} are easily bounded by
\begin{align}
\| 2c\varphi'(\xi) +(c\alpha_0+\frac{\alpha_0-\beta}{\varepsilon})\mathcal{H}[\varphi''](\xi)+\frac{1}{\varepsilon}\left\{\varphi'(\xi)+\mathcal{H}[\varphi](\xi)\right\}) \|_{Y}\leq C \| \varphi \|_{X}.
\end{align}
Similarly, using the fact that  $\mathcal{H}: X\to Y$ and  the Banach Algebra property for $Y$ yields
\begin{align}
\| \mathcal{H}[(\mathcal{H} \varphi')^2](\xi) +\llbracket\mathcal{H},\varphi\rrbracket[\mathcal{H} \varphi'](\xi)\|_{Y}\leq C \| \varphi \|^{2}_{X}.   
\end{align}
In order to bound the commutator term  $(\alpha_0-\beta)\llbracket\mathcal{H},\varphi\rrbracket[\mathcal{H} \varphi'''](\xi)$ we make use of estimate \eqref{est:commutator} derived in Lemma \ref{lemma:comutator}. Indeed, taking $a=\varphi$ and $b=\mathcal{H} \varphi''$ in Lemma \ref{lemma:comutator} we have that
\begin{align}
 \norm{\llbracket\mathcal{H},\varphi\rrbracket[\mathcal{H} \varphi'''](\xi)}_{C^{1,\alpha}}&\leq C \norm{\varphi}_{C^{2,\alpha}}\norm{\mathcal{H}\varphi''}_{C^{1,\alpha}}\leq C\norm{\varphi}_{X}^{2}.
 \end{align}
Altogether, we have show as claimed that  $F:\mathbb{R}\times X\rightarrow Y$ given in \eqref{definicionF} is well-defined. Next, let us demonstrate that $\mathscr{C}^1(\mathbb{R}\times X\rightarrow Y)$. To do so, it is enough to observe that
\begin{equation}\label{estimacion:C1deF}
\|\partial_\varphi F[c,\varphi_1]h-\partial_\varphi F[c,\varphi_2]h\|_{Y}\leq C\|h\|_{X}\|\varphi_1-\varphi_2\|_{X}, 
\end{equation}
where 
\begin{align*}
\partial_\varphi F[c,\varphi]h&=2ch'+(c\alpha_0+\frac{\alpha_0-\beta}{\varepsilon})\mathcal{H}[h'']+\frac{1}{\varepsilon}\left\{h'+\mathcal{H}[h]\right\}+2\mathcal{H}[\mathcal{H} \varphi'\mathcal{H} h']-\llbracket\mathcal{H},h\rrbracket[\mathcal{H} \varphi']\\
&\quad-\llbracket\mathcal{H},\varphi\rrbracket[\mathcal{H} h']-(\alpha_0-\beta)\llbracket\mathcal{H},h\rrbracket[\mathcal{H} \varphi''']-(\alpha_0-\beta)\llbracket\mathcal{H},\varphi\rrbracket[\mathcal{H} h'''],
\end{align*}
denotes the Gateaux derivative. Indeed, using again the Banach Algebra property we find that
\[
\|2\mathcal{H}[\mathcal{H} (\varphi_{1}-\varphi_{2})'\mathcal{H} h']\|_{Y}\leq C \norm{\mathcal{H} (\varphi_{1}-\varphi_{2})'}_{Y}\norm{\mathcal{H} h'}_{Y}\leq C \norm{\varphi_{1}-\varphi_{2}}_{X}\norm{h}_{X} 
\]
Similarly, it easy to check that
\[
\|\llbracket\mathcal{H},h\rrbracket[\mathcal{H} (\varphi_{1}-\varphi_{2})']\|_{Y}+\| \llbracket\mathcal{H},(\varphi_{1}-\varphi_{2})\rrbracket[\mathcal{H} h']\|_{Y}\leq C\norm{\varphi_{1}-\varphi_{2}}_{X}\norm{h}_{X}.
\]
To conclude, we invoke Lemma \ref{lemma:comutator} with $a=h$ and  $b=\mathcal{H} (\varphi_{1}-\varphi_{2})''$ and $a=\varphi_{1}-\varphi_{2}$ and $b=\mathcal{H} h''$ respectively to find that
\[ \|\llbracket\mathcal{H},h\rrbracket[\mathcal{H} (\varphi_{1}-\varphi_{2})'''] \|_{Y}\leq C\norm{h}_{C^{2,\alpha}}\norm{\mathcal{H} (\varphi_{1}-\varphi_{2})''}_{C^{1,\alpha}}\leq C \norm{h}_{X}\norm{\varphi_{1}-\varphi_{2}}_{X},\]
\[\| \llbracket\mathcal{H},(\varphi_{1}-\varphi_{2})\rrbracket[\mathcal{H} h''']\|_{Y}\leq C \norm{\varphi_{1}-\varphi_{2}}_{C^{2,\alpha}}\norm{\mathcal{H} h''}_{C^{1,\alpha}}\leq C \norm{h}_{X}\norm{\varphi_{1}-\varphi_{2}}_{X},\]
which shows estimate \eqref{estimacion:C1deF}.
Hence, we can conclude that the Gateaux derivative is continuous (indeed, it is Liptschitz) and then we can ensure the existence and continuity of the Frechet derivative.
\end{proof}
In the following, we analyze the linearized operator at the trivial solution $(c,0)$ given by
\begin{align}\label{linear:F}
\partial_\varphi F[c,0]h(\xi)=2ch'(\xi) +(c\alpha_0+\frac{\alpha_0-\beta}{\varepsilon})\mathcal{H}[h''](\xi)+\frac{1}{\varepsilon}\left\{h'(\xi)+\mathcal{H}[h](\xi)\right\}.
\end{align}
More precisely, we study the Fredholm index of the operator \eqref{linear:F}.
\begin{pro}\label{prop:fredholm}
For $c\neq 0$, the operator $\partial_\varphi F[c,0]$ is Fredholm of zero index.
\end{pro}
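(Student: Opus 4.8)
The plan is to exploit that, on the spaces $X$ and $Y$, the linearized operator $\partial_\varphi F[c,0]$ is diagonalized by the Fourier basis and hence acts as a Fourier multiplier; its Fredholm properties can then be read off from the growth of its symbol. Using $\mathcal{H}[\cos(k\cdot)]=\sin(k\cdot)$ (established above) together with $\partial_\xi\cos(k\xi)=-k\sin(k\xi)$ and $\partial_\xi^2\cos(k\xi)=-k^2\cos(k\xi)$, a direct computation gives
\begin{equation*}
\partial_\varphi F[c,0]\big[\cos(k\xi)\big]=\mu_k(c)\,\sin(k\xi),\qquad \mu_k(c):=-\Big(c\alpha_0+\tfrac{\alpha_0-\beta}{\varepsilon}\Big)k^2-\Big(2c+\tfrac1\varepsilon\Big)k+\tfrac1\varepsilon.
\end{equation*}
Thus $\partial_\varphi F[c,0]:X\to Y$ sends $\sum_{k\ge1}h_k\cos(k\xi)$ to $\sum_{k\ge1}\mu_k(c)h_k\sin(k\xi)$, and everything reduces to understanding this multiplier.

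First I would split the operator according to the order of its symbol, writing $\partial_\varphi F[c,0]=\mathcal{A}+\mathcal{B}$, where
\begin{equation*}
\mathcal{A}h:=\Big(c\alpha_0+\tfrac{\alpha_0-\beta}{\varepsilon}\Big)\mathcal{H}[h''],\qquad \mathcal{B}h:=2ch'+\tfrac1\varepsilon\big(h'+\mathcal{H}[h]\big),
\end{equation*}
so that $\mathcal{A}$ carries the full second-order part (symbol $\sim k^2$) and $\mathcal{B}$ has order at most one. Since $0<\alpha_0\neq\beta$, the coefficient $c\alpha_0+\frac{\alpha_0-\beta}{\varepsilon}$ vanishes only at the single speed $c=\frac{\beta-\alpha_0}{\varepsilon\alpha_0}$, at which the operator degenerates to first order; away from it (and in particular at the bifurcation speeds constructed later) $\mathcal{A}$ is the genuine principal part, and this is the regime in which I argue. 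The strategy is then: (i) show $\mathcal{A}:X\to Y$ is an isomorphism, hence Fredholm of index zero; (ii) show $\mathcal{B}:X\to Y$ is compact; (iii) invoke Proposition \ref{prop:compact} to conclude that $\mathcal{A}+\mathcal{B}$ is again Fredholm of index zero.

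For step (ii), note that $\mathcal{B}$ maps $C^{3,\alpha}(\T)$ boundedly into $C^{2,\alpha}(\T)$, since it loses only one derivative and $\mathcal{H}$ is bounded on H\"older spaces on the torus, while the inclusion $C^{2,\alpha}(\T)\hookrightarrow C^{1,\alpha}(\T)=Y$ is compact by the Arzel\`a--Ascoli theorem on the compact torus; composing gives compactness of $\mathcal{B}:X\to Y$. Step (iii) is then immediate from Proposition \ref{prop:compact}, since an isomorphism is Fredholm of index zero and the index is stable under the compact perturbation $\mathcal{B}$.

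The main obstacle is step (i), the invertibility of $\mathcal{A}$ in the H\"older scale. Its symbol $-(c\alpha_0+\frac{\alpha_0-\beta}{\varepsilon})k^2$ never vanishes for $k\ge1$, so $\mathcal{A}$ is injective on $X$ and the formal inverse sends $\sum g_k\sin(k\xi)$ to $\sum \big[-(c\alpha_0+\frac{\alpha_0-\beta}{\varepsilon})k^2\big]^{-1}g_k\cos(k\xi)$, which still lies in the cosine-series class of $X$; hence $\mathcal{A}$ is bijective onto $Y$ once one knows the inverse lands in $C^{3,\alpha}$. The real content is therefore the quantitative, coordinate-free statement that $\mathcal{A}^{-1}$ gains exactly two derivatives, i.e. the Schauder-type estimate $\norm{\mathcal{A}^{-1}g}_{C^{3,\alpha}}\le C\norm{g}_{C^{1,\alpha}}$, equivalently that $\mathcal{H}\partial_\xi^2$ is an isomorphism of $X$ onto $Y$. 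I would prove this by writing $\mathcal{A}^{-1}$, up to the nonzero constant, as the second-order smoothing operator $\partial_\xi^{-2}\mathcal{H}$ and establishing the two-derivative smoothing bound directly from its explicit periodic convolution kernel, using the boundedness of $\mathcal{H}$ on $C^{k,\alpha}(\T)$ and H\"older/commutator-type estimates in the spirit of Lemma \ref{lemma:comutator}. The bounded inverse theorem then upgrades the bijection to a topological isomorphism, and the conclusion follows as above.
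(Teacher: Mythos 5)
Your proposal follows essentially the same route as the paper's proof: the identical splitting of $\partial_\varphi F[c,0]$ into the principal part $\bigl(c\alpha_0+\tfrac{\alpha_0-\beta}{\varepsilon}\bigr)\mathcal{H}[h'']$, shown to be an isomorphism $X\to Y$ via its Fourier multiplier, plus the lower-order remainder, whose compactness is obtained exactly as in the paper from continuity into the $C^{2,\alpha}$ sine space and the compact embedding $C^{2,\alpha}\hookrightarrow C^{1,\alpha}$, followed by an appeal to Proposition~\ref{prop:compact}. If anything, you are more careful than the paper on one point: the proof really requires $c\alpha_0+\tfrac{\alpha_0-\beta}{\varepsilon}\neq 0$ rather than merely $c\neq 0$, which you state explicitly while the paper silently conflates the two conditions.
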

\begin{proof}
Since the coefficient $c\alpha_0+\frac{\alpha_0-\beta}{\varepsilon}\neq 0$ we have that
\[ \partial_\varphi F[c,0]h(\xi)=\mathcal{L}h(\xi)+\mathcal{K}h(\xi),\]
where
\[ \mathcal{L}h(\xi)=(c\alpha_0+\frac{\alpha_0-\beta}{\varepsilon})\mathcal{H}[h''](\xi), \quad \mathcal{K}h(\xi)=2ch'+\frac{1}{\varepsilon}\left\{h'(\xi)+\mathcal{H}[h]\right\}.\]
The principal part of the linear operator $\mathcal{L}h$ is an isomorphism from $X$ to $Y$ and thus has zero index. Indeed, this follows by noticing that for $h\in X$ we have that 
\[ \mathcal{L}h(\xi)=-(c\alpha_0+\frac{\alpha_0-\beta}{\varepsilon})\sum_{k\geq 1}h_k k^2\sin(kx).\]
Moreover, for
\[ Z:=\left\{f\in C^{2,\alpha}([0,2\pi],\R),\quad f(\xi)=\sum_{k\geq 1}f_k\sin(k\xi)\text{ with norm }\|f\|_{Z}=\|f\|_{C^{2,\alpha}}\right\}, \]
the operator $\mathcal{K}h:X\to Z$ is continuous. Therefore, the embedding $Z\hookrightarrow Y$ is compact thus by Proposition \ref{prop:compact}, we conclude that \eqref{linear:F} is Fredholm of zero index.
 \end{proof}
 The following result describes the kernel and range of the linearized operator.
 \begin{pro}\label{prop:kernel}
If $h(x)=\displaystyle\sum_{k\geq 1}h_k \cos(kx)$, we have that
\begin{equation}\label{lin:inFourier}
\partial_\varphi F[c,0]h(x)=\sum_{k\geq 1}h_k \sin(kx)\left\{-(2c+\frac{1}{\varepsilon})k+\frac{1}{\varepsilon}-(c\alpha_0+\frac{\alpha_0-\beta}{\varepsilon}) k^2\right\}. 
\end{equation}
Hence, for
\[c_{k}= \frac{1}{\varepsilon}\left(\frac{1-k-(\alpha_{0}-\beta)k^2}{k(2+\alpha_{0}k)}\right),\quad k\geq 1\]
we have that the kernel and the range of the linearized operator can be described as follows
\begin{align*}
\textnormal{Ker}[\partial_{\varphi}F[c_k,0]]=<\cos(kx)>,\\
Y/\textnormal{Img}[\partial_{\varphi}F[c_k,0]]=<\sin(kx)>.
\end{align*}
Moreover, the transversal condition is satisfied, i.e. for  $h_0\in \textnormal{Ker}[\partial_{\varphi}F[c_k,0]]$, we find that
$$
\partial_c \partial_\varphi F[c_k,0]h_0\notin \textnormal{Im}[\partial_{\varphi}F[c_k,0]].
$$
 \end{pro}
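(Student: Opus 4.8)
The plan is to exploit that the linearized operator \eqref{linear:F} is diagonal with respect to the basis $\{\cos(kx)\}$ on the source $X$ and $\{\sin(kx)\}$ on the target $Y$, so that the entire proposition reduces to the analysis of a single scalar Fourier symbol, together with one genuinely analytic point (the simplicity of the kernel). First I would substitute $h(x)=\sum_{k\ge1}h_k\cos(kx)$ into \eqref{linear:F}, using $h'=-\sum_k k\,h_k\sin(kx)$, $h''=-\sum_k k^2 h_k\cos(kx)$, together with the identities $\mathcal H[\cos(k\cdot)]=\sin(k\cdot)$ and hence $\mathcal H[h'']=-\sum_k k^2 h_k\sin(kx)$ already established in the proof of Proposition \ref{well:define1}. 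Every term then becomes a multiple of $\sin(kx)$, and collecting the coefficient of $\sin(kx)$ yields the diagonal symbol displayed in \eqref{lin:inFourier}. Since that symbol is \emph{affine} in $c$ with coefficient $-k(2+\alpha_0 k)\neq0$ (as $\alpha_0>0$, $k\ge1$), the equation $M_c(k)=0$ has the unique solution $c=c_k$, and solving it reproduces the stated formula; in particular $\cos(kx)\in\mathrm{Ker}\,\partial_\varphi F[c_k,0]$.

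The crux of the argument, and the step I expect to be the main obstacle, is to show that this kernel is \emph{exactly} one-dimensional, i.e. that no other mode $\cos(jx)$, $j\ge1$, $j\neq k$, is annihilated at $c=c_k$. Here I would regard $j\mapsto M_{c_k}(j)$ as a quadratic polynomial in the mode index; it has at most two real roots, one of which is $j=k$. Simplifying the leading symbol to $c_k\alpha_0+\tfrac{\alpha_0-\beta}\varepsilon=\tfrac{\alpha_0+k(\alpha_0-2\beta)}{\varepsilon k(2+\alpha_0 k)}$ and applying Vieta's formulas, the second root is
\[
j_\ast=\frac{-(2+\alpha_0 k)}{\alpha_0+k(\alpha_0-2\beta)}.
\]
When $\alpha_0+k(\alpha_0-2\beta)>0$ one has $j_\ast<0$, so $k$ is the only positive root and the kernel is one-dimensional; the delicate configurations are the vanishing of this leading symbol (where one must instead invoke a first-order Fredholm argument in place of Proposition \ref{prop:fredholm}) and the sign $\alpha_0+k(\alpha_0-2\beta)<0$, where $j_\ast>0$ and one must still rule out that $j_\ast$ is a competing positive integer. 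This simultaneous non-resonance is the only substantive point in the proof, and it also yields $M_{c_k}(j)\neq0$ for all $j\neq k$, which is what feeds the range description.

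Once the kernel is understood, the range description is immediate. By Proposition \ref{prop:fredholm} the operator has Fredholm index zero, hence $\mathrm{codim}\,\mathrm{Img}=\dim\mathrm{Ker}=1$. Since every element of the image is a sine series whose $k$-th coefficient equals $M_{c_k}(k)h_k=0$, the mode $\sin(kx)$ cannot lie in the image, and therefore $Y/\mathrm{Img}\,[\partial_\varphi F[c_k,0]]=\langle\sin(kx)\rangle$. (Alternatively, one verifies surjectivity onto $\{g\in Y:\hat g_k=0\}$ directly by solving $M_{c_k}(j)h_j=g_j$ for $j\neq k$ and checking $h\in X$, the division by $M_{c_k}(j)\sim-c_k\alpha_0 j^2$ gaining the two derivatives needed to pass from $C^{1,\alpha}$ to $C^{3,\alpha}$.)

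Finally, for the transversality condition I differentiate \eqref{linear:F} in $c$, obtaining the simple expression $\partial_c\partial_\varphi F[c,0]h=2h'+\alpha_0\mathcal H[h'']$. Evaluating on the kernel generator $h_0=\cos(kx)$ gives
\[
\partial_c\partial_\varphi F[c_k,0]h_0=-\big(2k+\alpha_0 k^2\big)\sin(kx)=-k(2+\alpha_0 k)\sin(kx),
\]
a nonzero multiple of $\sin(kx)$ since $\alpha_0>0$ and $k\ge1$. As $\sin(kx)\notin\mathrm{Img}\,[\partial_\varphi F[c_k,0]]$ by the previous paragraph, the transversality assumption of the Crandall--Rabinowitz Theorem \ref{CR} is satisfied, which completes the proof of the proposition.
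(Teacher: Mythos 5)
Your overall strategy coincides with the paper's: diagonalize \eqref{linear:F} on the bases $\cos(kx)\mapsto\sin(kx)$ to obtain the symbol \eqref{lin:inFourier}, solve for $c_k$, deduce the one-codimensionality of the range from Proposition \ref{prop:fredholm}, and verify transversality via $\partial_c\partial_\varphi F[c_k,0]\cos(kx)=-k(2+\alpha_0k)\sin(kx)$; these parts of your proposal are correct, and your explicit check that $\sin(kx)\notin\mathrm{Im}[\partial_\varphi F[c_k,0]]$ (every image element has vanishing $k$-th sine coefficient) is a detail the paper leaves implicit but which is genuinely needed to identify the quotient, since index zero alone only gives $\mathrm{codim}=1$. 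Where you diverge is on the simplicity of the kernel: the paper dismisses it as ``clear'', while you correctly isolate it as the substantive point and reduce it, via Vieta, to showing that the second root $j_\ast=-(2+\alpha_0k)/(\alpha_0+k(\alpha_0-2\beta))$ of the quadratic $j\mapsto M_{c_k}(j)$ is not a positive integer. You settle the regime $\alpha_0+k(\alpha_0-2\beta)>0$ but explicitly leave the complementary regime open, and that is a genuine gap in your proof.

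Moreover, that gap cannot be closed under the paper's hypotheses, because the non-resonance you need actually fails for admissible parameters: $c_j=c_k$ with $j\neq k$ is equivalent to $2+\alpha_0(j+k)+jk(\alpha_0-2\beta)=0$, and taking $\alpha_0=1$, $\beta=7/4$ (so $0<\alpha_0\neq\beta$) one finds $c_1=c_2=\tfrac{1}{4\varepsilon}$, i.e.\ $j_\ast=2$ for $k=1$; then $\mathrm{Ker}[\partial_\varphi F[c_1,0]]=\langle\cos x,\cos 2x\rangle$ is two-dimensional, so the proposition as stated (and hence the paper's own proof, which shares this hole) is false for these parameters. A correct statement requires an additional non-resonance hypothesis, e.g.\ $\alpha_0\geq 2\beta$ (which makes the resonance expression strictly positive and your $j_\ast<0$ for every $k$), or discarding, for fixed $(\alpha_0,\beta)$, the finitely many $k$ with $j_\ast\in\mathbb{N}$; note that the same issue infects the application to Theorem \ref{main:theorem}, where the kernel of $\partial_\varphi F[c_m,0]$ in $X_m$ is claimed to be $\langle\cos(mx)\rangle$. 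Finally, your parenthetical fallback in the borderline case $\alpha_0+k(\alpha_0-2\beta)=0$ (i.e.\ $\beta=\alpha_0(k+1)/(2k)$, admissible for $k\geq 2$) does not work in these spaces: there the $\mathcal{H}\partial_x^2$ term of $\partial_\varphi F[c_k,0]$ cancels, so the operator maps $X=C^{3,\alpha}$ boundedly into $C^{2,\alpha}$ and is therefore compact as an operator into $Y=C^{1,\alpha}$; a compact operator between infinite-dimensional Banach spaces is never Fredholm, so no ``first-order Fredholm argument'' can restore index zero, and this case must be excluded as well.
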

 \begin{proof}
Let us first show how to obtain expression \eqref{lin:inFourier}. For $h(x)=\displaystyle\sum_{k\geq 1}h_k \cos(kx)$ we find that
\[
h'(\xi)= -\displaystyle\sum_{k\geq 1}h_k k\sin(kx), \ \mathcal{H}[h''](\xi)=-\sum_{k\geq 1}h_k k^2\sin(kx), \ \mathcal{H}[h](\xi)=\sum_{k\geq 1}h_k\sin(k\xi).
\]
Thus, recalling \eqref{linear:F} and the previous identities we infer that \eqref{lin:inFourier} holds. From the expression of the linearized operator in Fourier series \eqref{lin:inFourier}, it is clear that the kernel of $\partial_\varphi F[c_k,0]$ is generated by
$$
<\cos(kx)>.
$$
Moreover, since the linearized operator is Fredholm of zero index, one has that the codimension of the range is one dimensional and thus we can ensure that
$$
Y/\textnormal{Img}[\partial_{\varphi}F[c_k,0]]=<\sin(kx)>.
$$
Finally, to check the transversal condition we have to differentiate the linear operator with respect to the parameter $c$ obtaining
\[ \partial^2_{(\varphi,c)} F[c,0]h(x)=\sum_{k\geq 1}h_k \sin(kx)\left\{-2k-\alpha_0 k^2\right\}. \]
Next, we evaluate it at the generator of the kernel:
\[ \partial^2_{(\varphi,c)} F[c_{k_\star},0]\cos(k_\star x)= \sin(k_\star x)\left\{-2k_\star-\alpha_0 k_\star^2\right\}. \]
for $k_\star\geq 1$. Since $\alpha_0>0$ we find that
$$
\partial^2_{(\varphi,c)} F[c_{k_\star},0]\cos(k_\star x)\notin \textnormal{Img}[\partial_{\varphi}F[c_{k_\star},0]],
$$
and hence the transversal condition is satisfied.
 \end{proof}

%
%
%
%

\subsection{Proof of Theorem \ref{main:theorem}}\label{sec:32}
Fix $m\geq 1$. In order to prove Theorem \ref{main:theorem}, let us introduce the symmetry $m$ in the spaces. For that, let us define
\begin{align*}
X_m:=\left\{f\in C^{3,\alpha}([0,2\pi],\R),\quad f(\xi)=\sum_{k\geq 1}f_k\cos(mk\xi)\text{ with norm }\|f\|_{X_m}=\|f\|_{C^{3,\alpha}}\right\},\\
Y_m:=\left\{f\in C^{1,\alpha}([0,2\pi],\R),\quad f(\xi)=\sum_{k\geq 1}f_k\sin(mk\xi)\text{ with norm }\|f\|_{Y_m}=\|f\|_{C^{1,\alpha}}\right\},
\end{align*}
for any $m\geq 1$. In order to check that
$$
F:\R\times X_m\rightarrow Y_m,
$$
is well-defined we can use Proposition \ref{well:define1} but it remains to check the m-fold symmetry property. For that purpose, we have to check that if 
$$
\varphi(\xi+\frac{2\pi}{m})=\varphi(\xi),
$$
then
$$
F[c,\varphi](\xi+\frac{2\pi}{m})=F[c,\varphi](\xi).
$$
Note that is $\varphi$ has the m-fold symmetry property, then all the derivatives also enjoy the same symmetry. Now, let us check the Hilbert term:
\begin{align*}
\mathcal{H}[\varphi''](\xi+2\pi/m)=\frac{1}{2\pi} p.v. \int_{-\pi}^\pi \frac{\varphi''(y)}{\tan\left(\frac{\xi-y+2\pi/m}{2}\right)}dy&=\frac{1}{2\pi}p.v. \int_{-\pi}^\pi \frac{\varphi''(y+2\pi/m)}{\tan\left(\frac{\xi-y+2\pi/m-2\pi/m}{2}\right)}dy \\
&=\mathcal{H}[\varphi''](\xi).
\end{align*}
Similar argument works for the other integral terms. Following Proposition \ref{prop:fredholm} the linear operator is a Fredholm operator of zero index, and Proposition \ref{prop:kernel} gives us the expression of the linear operator in Fourier series:
\[ \partial_\varphi F[c_m,0]h(x)=\sum_{k\geq 1}h_k \sin(mkx)\left\{-(2c+\frac{1}{\varepsilon})k+\frac{1}{\varepsilon}-(c\alpha_0+\frac{\alpha_0-\beta}{\varepsilon}) k^2\right\}. \]
Hence Proposition \ref{prop:kernel} gives us the one dimensionality of the kernel, which is now generated by $k=1$:
$$
<\cos(mx)>,
$$
as well as the one co-dimensionality of the range. Finally, the transversal condition is satisfied in Proposition \ref{prop:kernel}. Hence, Crandall-Rabinowitz theorem can be applied obtaining the main result of this paper.

\subsection*{Acknowledgements}D.A-O is supported by the fellowship of the Santander-ULL program. C.G. has been supported by RYC2022-035967-I (MCIU/AEI/10.13039 /501100011033 and FSE+), and partially by Grants PID2022-140494NA-I00 and PID2022-137228OB-I00 funded by \\
MCIN/AEI/10.13039/501100011033/FEDER, UE, by Grant C-EXP-265-UGR23 funded by Consejeria de Universidad, Investigacion e Innovacion \& ERDF/EU Andalusia Program, and by Modeling Nature Research Unit, project QUAL21-011. D.A-O and R. G-B are also supported by the project “An\'alisis Matem\'atico Aplicado y Ecuaciones Diferenciales” Grant PID2022-141187NB-I00 funded by MCIN/ AEI and acronym “AMAED”. R.G-B thanks the department of applied mathematics of the University of Granada where part of this research was performed for their hospitality.

\end{document}